\newcommand{\apref}[3]{\hyperref[#2]{#1\ref*{#2}#3}}
\theoremstyle{plain}
\newtheorem{prop}{Proposition}[section]
\newtheorem{lemma}[prop]{Lemma}
\newtheorem{thm}[prop]{Theorem}
\newtheorem{cor}[prop]{Corollary}
\theoremstyle{definition}
\newtheorem{example}[prop]{Example}
\theoremstyle{remark}
\newtheorem{remark}[prop]{Remark}
\newcommand\N{\mathbb{N}}
\newcommand\R{\mathbb{R}}
\begin{document}

\title[]{Asymptotic expansion for the eigenvalues of a perturbed anharmonic oscillator}
\author[K.\@ Fedosova]{Ksenia Fedosova}
\address{KF: Albert-Ludwigs-Universit\"at Freiburg, Mathematisches Institut, Ernst-Zermelo-Str. 1, 79104 Freiburg im Breisgau, Germany}
\email{ksenia.fedosova@math.uni-freiburg.de}
\author[M.\@ Nursultanov]{Medet Nursultanov}
\address{MN: Chalmers University of Technology and University of Gothenburg,
	SE-412 96, Gothenburg, Sweden}
\email{medet.nursultanov@gmail.com}
\subjclass[2010]{Primary: 34E10, 34L20; Secondary: 81Q15, 34B24}
\keywords{anharmonic oscillator, harmonic oscillator, perturbation, asymptotic expansion, heat trace expansion}
\begin{abstract}
In this article, we study the spectral properties of the perturbation of the generalized anharmonic oscillator. We consider a piecewise H\"older continuous perturbation and investigate how the H\"older constant can affect the eigenvalues. More precisely, we derive several first terms in the asymptotic expansion for the eigenvalues.
\end{abstract}
\maketitle

\section{Introduction}
Spectral properties of the Sturm-Liouville operators have been studied for more than a century due to numerous applications in mathematics, mechanics, physics and other branches of natural sciences. One of the special cases is the anharmonic oscillator (AHO), which is defined as
\begin{equation*}
H\equiv-\frac{d^2}{dx^2}+q(x), 
\quad q(x)=\sum_{j=1}^{m}a_jx^{2j},
\quad x \in \R.
\end{equation*}
where $a_m>0$. To mention few of many applications, this class of operators provides an equivalent approach to $\lambda \phi^4$-field theory \cite{bender1969anharmonic}, AHOs are used in vibrational spectroscopy as a model for diatomic molecules \cite{sathyanarayana2015vibrational}, and besides, AHOs describe the thermal expansion of crystals \cite{crystals}.

The most famous type of AHO is the harmonic oscillator (HO) that is $q(x)=x^2$. Due to the importance in physics and being a simple and an elegant model, HO is well understood. In particular, its eigenvalues equal $\lambda_n=2n-1$, $n \in \N$ and the corresponding normalized eigenfunctions are explicitly expressed in terms of the Chebyshev-Hermite polynomials; see for instance \cite{LS}. Moreover, there are a lot of publications about perturbed HO, $q(x)=x^2+V(x)$; we mention works concerning the spectral asymptotics\cite{PS,A,Gurarie,Kostenko, KKP,Sakhnovich}.

A HO perturbed by a smooth compactly supported perturbation $V(x)$ is considered in \cite{PS}, where for the corresponding eigenvalues, a complete asymptotic expansion and trace formulas are obtained in terms of the heat invariants. In \cite{KKP}, the authors study a perturbation of HO by a bounded complex function with bounded derivative and bounded indefinite integral. As a main result they obtain the asymptotics for the eigenvalues. The same question, but for a non-smooth perturbation is considered in \cite{A}. For a real-valued measurable perturbation $V(x)$ with certain decay, the asymptotic formula for eigenvalues is established.

In contrast to the HO, the model of AHO, in general settings, cannot be solved analytically, and thus one has to resort to approximation methods for its solution. Several approaches have been used for the numerical evaluation of the eigenvalue problem, see \cite{GSS} and references therein. 

The spectral properties of AHO were studied in \cite{Ruzhansky,HelfferRobert,Camus,Fucci,CamusRoutenberg}. In \cite{HelfferRobert}, the authors studied, in particular, the operators of the form $-d^{2k}/dx^{2k}+x^{2l}+p(x)$, for $k$, $l\in \mathbb{N}$ and with $p(x)$ being a polynomial of degree less than $2l$. They established the asymptotic formula which describes the behavior of the eigenvalues. In higher dimension, the Laplace operator, perturbed by a smooth radially symmetric polynomial potential on unbounded domain, is considered in \cite{Fucci}. The author obtains the asymptotic expansion of the heat kernel trace. In \cite{MA}, the authors obtain the eigenvalue asymptotic of $H$ in $L^2[0,\infty)$ with the Dirichlet boundary condition and $q(x)=x^{\alpha}+V(x)$, where $\alpha>0$ and $V(x)$ is a real-valued, compactly supported, twice differentiable function on $[0,\infty)$.


In most works concerning the spectral properties of perturbed AHO, the perturbations are smooth. For an actual real-world potential smoothness is not necessarily guaranteed. For this reason, we want to reduce the smoothness and explore how this will affect the eigenvalues, we require $V(x)$ to be only piecewise H\"older-continuous.

\begin{thm}\label{main_th}

Let $H$ be the self-adjoint operator in $L^2(\mathbb{R})$, generated by 
\begin{equation}\label{def_patential1}
-\frac{d^2}{dx^2} + |x|^{\alpha}+V(x),
\end{equation}
where  $\alpha > 0$, and $V(x)$ is a bounded, real-valued, compactly supported, piecewise\footnote{We mean that there is a finite number of pieces such that $V$ is H{\"o}lder continuous on each piece.} H{\"o}lder continuous function with an exponent $\tau>0$. Then the sequence of eigenvalues $\{\lambda_n\}_{n=1}^{\infty}$ of $H$ satisfies the following asymptotic formula\footnote{Note that the constants $C_0$,$C_1$, and $C_2$ are defined slightly differently than in \cite{MA}.}
\begin{align}\label{main_asy}
	\begin{split}
		\lambda_n&=C_1^{-\frac{2\alpha}{\alpha+2}}(2n-1)^{\frac{2\alpha}{\alpha+2}}\\
		&+\frac{2\alpha}{\alpha+2}C_0C_1^{-\frac{\alpha+4}{\alpha+2}}(2n-1)^{-\frac{2}{\alpha+2}}\\
		&+\frac{2\alpha}{\alpha+2}\frac{1}{4\pi}C_1^{-\frac{\alpha+4}{\alpha+2}}(2n-1)^{-\frac{2}{\alpha+2}}\int_{-\infty}^{\infty}V(s)\cos\left(2C_1^{-\frac{\alpha}{\alpha+2}}(2n-1)^{\frac{\alpha}{\alpha+2}}s\right)ds\\
		&+\frac{2\alpha}{\alpha+2}C_2C_1^{-\frac{\alpha+6}{\alpha+2}}(2n-1)^{-\frac{4}{\alpha+2}}+O\left(n^{-1}\right),
	\end{split}
\end{align}
where
\begin{equation*}
	C_1=\frac{4\Gamma\left(\frac{3}{2}\right) \Gamma\left(\frac{1}{\alpha}\right)}{\alpha\pi \Gamma\left(\frac{3}{2}+\frac{1}{\alpha}\right)},
	\quad
	C_0=\frac{1}{\pi}\int_{-\infty}^{\infty}V(s)ds,
	\quad
	C_2=\frac{\alpha-1}{12\pi(2+\alpha)}\cot\left(\frac{\pi}{\alpha}\right)C_1^{-1},
\end{equation*}
    and $\Gamma (\cdot)$ is the gamma function.
\end{thm}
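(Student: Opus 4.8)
The plan is to run a semiclassical (WKB--type) analysis of the equation $-y''+(|x|^\alpha+V(x))y=\lambda y$ as $\lambda\to+\infty$, leading to a Bohr--Sommerfeld quantization condition with an explicitly controlled remainder, which one then inverts; this refines the scheme of \cite{MA}, the novel point being that $V$ is only piecewise H\"older. Since $|x|^\alpha+V(x)\to+\infty$, the operator $H$ has compact resolvent, so its spectrum is discrete and simple with $\lambda_n\to+\infty$, and by Sturm oscillation theory $\lambda=\lambda_n$ is characterised by the $L^2(\R)$ solution having exactly $n-1$ zeros. Fix $A>0$ with $\supp V\subset[-A,A]$. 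For $\lambda$ large the turning points are \emph{exactly} $\pm\lambda^{1/\alpha}$ (because $V\equiv 0$ there), and $[-A,A]$ lies deep inside the classically allowed region. On the two outer arms $A\le|x|\le\lambda^{1/\alpha}$ the potential is the smooth function $|x|^\alpha$, so I would use the standard WKB approximation together with Airy-type connection formulae at $\pm\lambda^{1/\alpha}$; on $[-A,A]$ I would treat the equation as a bounded perturbation of $-y''=\lambda y$ and solve it by a Volterra/Born iteration (equivalently, a modified Pr\"ufer phase relative to the comparison potential $|x|^\alpha$). Matching the two $L^2$ solutions coming from $\pm\infty$ then yields a relation
\[
\int_{-\lambda^{1/\alpha}}^{\lambda^{1/\alpha}}\sqrt{\lambda-|x|^\alpha}\,dx\;+\;\mathcal W(\lambda)\;+\;\mathcal V_0(\lambda)\;+\;\mathcal V_{\mathrm{osc}}(\lambda)\;+\;\mathcal R(\lambda)\;=\;\pi\Bigl(n-\tfrac12\Bigr),
\]
where the Maslov term $\tfrac12$ comes from the two simple turning points, $\mathcal W$ is the second-order WKB (Dunham) correction built from $|x|^\alpha$, $\mathcal V_0$ and $\mathcal V_{\mathrm{osc}}$ are the non-oscillatory and oscillatory parts of the first-order-in-$V$ correction, and $\mathcal R$ collects the second-order-in-$V$ terms, the higher WKB terms, and the turning-zone errors.

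The next step is to compute each ingredient asymptotically. The leading integral is evaluated exactly via $x=\lambda^{1/\alpha}t$ and $\int_0^1\sqrt{1-t^\alpha}\,dt=\tfrac1\alpha B(\tfrac1\alpha,\tfrac32)$, producing $\tfrac{\pi}{2}C_1\lambda^{(\alpha+2)/(2\alpha)}$ with $C_1$ as in the statement. The Dunham term $\mathcal W(\lambda)$ is a universal constant multiple of the analytically continued integral $\int\frac{(|x|^\alpha)''}{(\lambda-|x|^\alpha)^{3/2}}\,dx$ --- the turning-point divergence being removed by combining it, before integration by parts, with $\int\frac{((|x|^\alpha)')^2}{(\lambda-|x|^\alpha)^{5/2}}\,dx$ --- and the same scaling reduces it to $\lambda^{-(\alpha+2)/(2\alpha)}$ times a Beta-type integral whose $\Gamma$-factors collapse to $\cot(\pi/\alpha)$ and the rational prefactor $(\alpha-1)/12$; this yields the $C_2$ term. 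For the $V$-corrections, on $[-A,A]$ one has $\sqrt{\lambda-|x|^\alpha}=\sqrt\lambda\,(1+O(\lambda^{-1}))$ and the unperturbed phase equals $\sqrt\lambda\,x+O(\lambda^{-1/2})$ uniformly, so $\mathcal V_0(\lambda)$ is a constant multiple of $C_0\,\lambda^{-1/2}$ up to $O(\lambda^{-3/2})$, while $\mathcal V_{\mathrm{osc}}(\lambda)$ is a constant multiple of $\lambda^{-1/2}\int V(s)\cos(2\sqrt\lambda\,s)\,ds$ up to lower-order terms. The \emph{only} place the regularity of $V$ enters is the bound $\int V(s)\cos(\omega s)\,ds=O(\omega^{-\tau})$, which I would prove by splitting $V$ into its finitely many H\"older pieces and, on each, using $\int f(s)e^{i\omega s}\,ds=\tfrac12\int\bigl(f(s)-f(s-\pi/\omega)\bigr)e^{i\omega s}\,ds$, bounded by $\tfrac12\|f(\cdot)-f(\cdot-\pi/\omega)\|_{L^1}\lesssim\omega^{-\tau}$ together with endpoint terms of the same order. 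Since here $\omega\asymp\sqrt{\lambda_n}\asymp n^{\alpha/(\alpha+2)}$, this term decays only like $n^{-(2+\alpha\tau)/(\alpha+2)}$ and cannot in general be absorbed into the $O(n^{-1})$ error --- exactly the feature absent from the smooth case of \cite{MA}.

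Finally one inverts the quantization condition: writing the left-hand side as $F(\lambda)=\tfrac{\pi}{2}C_1\lambda^{(\alpha+2)/(2\alpha)}+G(\lambda)$ with $G$ collecting all corrections, the leading equation $F(\lambda_n)=\pi(n-\tfrac12)$ gives $\lambda_n^{(0)}=C_1^{-2\alpha/(\alpha+2)}(2n-1)^{2\alpha/(\alpha+2)}$, and since $F'(\lambda)$ is a constant multiple of $\lambda^{(2-\alpha)/(2\alpha)}$ a Taylor expansion (Lagrange inversion / bootstrap) about $\lambda_n^{(0)}$ turns each correction $\delta$ in the condition into a contribution to $\lambda_n$ that is a constant multiple of $\tfrac{2\alpha}{\alpha+2}\,C_1^{-1}\,(\lambda_n^{(0)})^{(\alpha-2)/(2\alpha)}\,\delta$ --- whence the ubiquitous factor $\tfrac{2\alpha}{\alpha+2}$ and the various powers of $C_1$ in \eqref{main_asy}. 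Substituting $\mathcal W$, $\mathcal V_0$, $\mathcal V_{\mathrm{osc}}$ and re-expressing the cosine's argument in terms of $\lambda_n^{(0)}$ produces the stated four terms; it then remains to check that every omitted piece --- the second-order-in-$V$ terms, the third-order WKB terms, the quadratic-in-$G$ terms of the inversion, the $O(\lambda^{-3/2})$ corrections to the $V$-integrals, and the error in replacing $\sqrt{\lambda_n}$ by $\sqrt{\lambda_n^{(0)}}$ inside the cosine --- is $O(\lambda_n^{-(\alpha+2)/(2\alpha)})=O(n^{-1})$, a routine but lengthy power count. I expect the main obstacle to be the first step: making the outer WKB analysis for $|x|^\alpha$ rigorous with an explicit remainder (the Airy connection across $\pm\lambda^{1/\alpha}$, and the mild non-smoothness of $|x|^\alpha$ at the origin when $\alpha\notin2\N$), together with a uniform-in-$\lambda$ remainder bound on $[-A,A]$ that uses no derivative of $V$; once the quantization condition is established with an $O(\lambda^{-(\alpha+2)/(2\alpha)})$ error, the rest is explicit computation with $\Gamma$-functions and bookkeeping.
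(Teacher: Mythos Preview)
Your plan is correct and lands on the same quantization-then-invert scheme as the paper, but the execution differs in two ways worth noting. First, the paper does not carry out a direct WKB/Dunham analysis of $|x|^\alpha$ on the outer arms: it imports from \cite{MA} the outer $L^2$ solution and its boundary data at $x=b$, encoded in functions $d_1(\lambda),d_2(\lambda)$ and $Q(b,\lambda)$, so your $\mathcal W$-term simply appears as the ready-made $d_2(\lambda)$ of \cite[(49)]{MA}; this completely sidesteps the obstacle you flag (Airy connection, and the non-smoothness of $|x|^\alpha$ at $0$). Second, and more substantively, rather than matching the two $L^2$ solutions from $\pm\infty$ via a single Wronskian, the paper parametrises the inner solution on $[-b,b]$ by $(c_1,c_2)=(\cos\phi(\lambda),\sqrt{\mu}\sin\phi(\lambda))$, matches separately at $+b$ and $-b$ to get \emph{two} equations in $(\lambda,\phi)$, and then shows that as $\lambda\to\infty$ one is forced into $\phi\approx 0$ or $\phi\approx\pi/2$; this splits the spectrum into interlacing ``almost even'' and ``almost odd'' families, and a counting-function comparison with the unperturbed operator confirms no eigenvalues are missed. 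Your single-Wronskian route is more streamlined, while the paper's $\phi$-parametrisation buys a structural insight (asymptotic parity of eigenfunctions) and yields the half-line Dirichlet/Neumann corollary for free. The inner Volterra iteration, the H\"older bound on $\int V(s)\cos(\omega s)\,ds$, and the final inversion are essentially identical in both approaches.
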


To the best knowledge of the authors, these types of perturbations of anharmonic oscillators have not been previously treated in the physics literature. There has, however, been research on perturbations of an harmonic oscillator by Gaussian noise: \cite{BernardinGoncalvesMilton}, \cite{Gitterman}. Moreover, H\"older continuous potentials in Sturm-Liouville operators (that are not AHO) appear in \cite{Ikebe} and \cite{Schmidt}.

Theorem \ref{main_th} shows that the perturbation, $V(x)$, does not affect the first term. However, it appears in the second term, while the regularity, the parameter $\tau$, affects only the third term. Indeed, in case $V(x)$ being smooth and compactly supported, the third term would decay rapidly. When $V(x)$ is H{\"o}lder continuous with an exponent $\tau>0$, we can say only that the third term is $O(n^{-\frac{\alpha\tau+2}{\alpha+2}})$. In order to demonstrate more explicitly the effect of the smoothness, we construct an example. There we consider the operator $H$ from Theorem \ref{main_th} for $\alpha=2$ and  $V(x)$ being the Weierstrass function defined as in \eqref{exampleV}. Then we find the subsequence of the eigenvalues $\{\lambda_{n_k}\}_{k=1}^{\infty}$ such that
\begin{equation*}
	\lambda_{n_k}=2n_k-1+n_k^{-\frac{1}{2}}\frac{1}{4\sqrt{2}}\int_{-\pi}^{\pi}V(s)ds +n_k^{-\frac{1+\tau}{2}}2^{-\frac{5+3\tau}{2}}+O(n_k^{-1}).
\end{equation*}

To some extent, the proof of Theorem \ref{main_th} generalizes to more general potentials. However, the results are not so  explicit:

\begin{thm}\label{gen_th}
	Assume that $\{a_j\}_{j=1}^{N}$ and $\{\alpha_j\}_{j=1}^{N}$ are sets of real numbers such that $0<\alpha_1<...<\alpha_{N}$ and $a_N>0$. Let $H$ be a self-adjoint operator in $L^2(\mathbb{R})$, generated by the expression
	\begin{equation}\label{def_patential}
	-\frac{d^2}{dx^2} + q(x), \quad q(x) = \sum_{j=1}^N a_j |x|^{\alpha_j}+V(x),
	\end{equation}
	where $V(x)$ is a bounded, real-valued, piecewise H{\"o}lder continuous function with an exponent $\tau>0$, compactly supported in $(-b,b)$ for some $b>0$, which we consider to be fixed. Then the sequence of eigenvalues $\{\lambda_n\}_{n=1}^{\infty}$ of $H$ satisfies
	\begin{align}\label{asy_gen}
		\begin{split}
			\frac{\pi}{4}(2n-1)=&Q(b,\lambda_n)+b\sqrt{\lambda_n-q(b)}-\frac{1}{4\sqrt{\lambda_n}}\int_{-b}^{b}(q(s)-q(b))ds\\
			&-\frac{1}{4\sqrt{\lambda_n}}\int_{-b}^{b}V(s)\cos(2\sqrt{\lambda_n}s)ds+O\left(\lambda_n^{-\frac{\alpha+2}{2\alpha}}\right)+O\left(\lambda_n^{-1}\right).
		\end{split}
	\end{align}
	where $\alpha:=\alpha_N$,
	\begin{equation*}
		Q(x,\lambda):=\int_{x}^{a(\lambda)}\sqrt{\lambda-q(t)}dt,
	\end{equation*}
	and $a(\lambda)$ is the turning point, that is $q(a(\lambda))=\lambda$ for sufficiently large $\lambda>0$.
\end{thm}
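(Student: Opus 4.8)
The strategy is the classical Liouville–Green (WKB) / Prüfer-transformation approach to the Bohr–Sommerfeld quantization condition, adapted to handle the low-regularity perturbation $V$. First I would fix a large energy $\lambda>0$ and locate the turning point $a(\lambda)$ with $q(a(\lambda))=\lambda$; since $q(x)\sim a_N|x|^{\alpha}$ at infinity, $a(\lambda)\asymp\lambda^{1/\alpha}$, and for $\lambda$ large the potential $q$ is smooth and monotone on $(b,\infty)$, so on that region standard WKB with error control applies. The eigenvalue equation $-u''+q u=\lambda u$ has, for an $L^2$ eigenfunction, prescribed decay as $x\to+\infty$; writing the Prüfer phase $\theta(x)$ via $u=\rho\sin\theta$, $u'=\rho\sqrt{\lambda-q}\cos\theta$ (in the classically allowed region) one shows the total phase accumulated from $x=b$ to the turning point equals $Q(b,\lambda)+\pi/4+O(\lambda^{-\frac{\alpha+2}{2\alpha}})$, the $\pi/4$ being the usual Airy connection contribution and the error coming from the WKB correction term $\int (q'')/(\lambda-q)^{3/2}$-type integral evaluated near the turning point, which is where the exponent $-\frac{\alpha+2}{2\alpha}$ is produced. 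By the $x\mapsto -x$ behaviour (the problem is on all of $\R$ but $q$ is even outside $(-b,b)$ only up to $V$; one treats $x\to-\infty$ symmetrically), the quantization condition becomes: twice the phase over $(-b,b)$ plus $2Q(b,\lambda)+\pi/2 = 2\pi n$ modulo the corrections, i.e. $\frac{\pi}{4}(2n-1)=Q(b,\lambda_n)+(\text{phase over }(0,b))+\dots$; the precise bookkeeping of the half-integer shift is routine.

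The core of the argument is the evaluation of the Prüfer phase increment across the compact interval $(-b,b)$ where $V$ lives. Here $\lambda-q(x)=\lambda-q(b)+O(1)$ is large and smooth apart from $V$, so one expands $\sqrt{\lambda-q(x)}=\sqrt{\lambda}\sqrt{1-q(x)/\lambda}=\sqrt{\lambda}\bigl(1-\tfrac{q(x)}{2\lambda}-\tfrac{q(x)^2}{8\lambda^2}+\cdots\bigr)$ and integrates. The leading term gives $b\sqrt{\lambda-q(b)}$ after combining with the outer piece and a regrouping; the $O(\lambda^{-1/2})$ term produces $-\tfrac{1}{4\sqrt\lambda}\int_{-b}^b(q(s)-q(b))\,ds$. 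The delicate point is the $V$-dependent part: because $V$ is only piecewise Hölder, one cannot integrate by parts to gain decay, and instead the oscillatory integral $\int_{-b}^b V(s)\cos(2\sqrt\lambda\, s)\,ds$ must be kept as-is; it arises because the WKB phase, when linearized in $V$, contributes a term whose stationary/oscillatory structure is governed by $\cos(2\sqrt\lambda s)$ — this comes from the interference between the $e^{i\sqrt\lambda x}$ and $e^{-i\sqrt\lambda x}$ WKB branches, equivalently from the Prüfer equation $\theta'=\sqrt{\lambda-q}+\frac{(\text{lower order})}{\sqrt{\lambda-q}}\cos 2\theta$ with $\theta\approx\sqrt\lambda x+$const. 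Iterating this Riccati/Prüfer equation once and isolating the $V$-linear, non-oscillatory-after-averaging piece yields exactly the stated term, with the remainder (second iteration, quadratic in $V$, and the $V$-independent $\lambda^{-1}$ terms) absorbed into $O(\lambda^{-1})$.

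I would organize the proof as: (1) a turning-point lemma giving $a(\lambda)$ and the WKB approximation with explicit error on $(b,\infty)$, including the Airy connection formula and the $\pi/4$; (2) a Prüfer-transformation lemma on $(-b,b)$ controlling $\theta(b)-\theta(-b)$ to the required precision, treating $V$ perturbatively via one iteration of the Prüfer ODE; (3) matching at $x=\pm b$ and at $x=0$ (continuity of $u$ and $u'$), assembling the global phase and extracting the quantization condition; (4) inverting asymptotically to express everything in $\lambda_n$, and collecting error terms. The main obstacle is step (2) under only piecewise-Hölder regularity: keeping the oscillatory integral $\int V(s)\cos(2\sqrt{\lambda_n}s)\,ds$ as an honest term (rather than an error) while showing that \emph{all} other $V$-contributions — in particular the $\cos 2\theta$ cross-terms from the second Prüfer iteration and the corrections from the phase $\theta$ itself deviating from the linear profile $\sqrt\lambda x$ — are genuinely $O(\lambda^{-1})$. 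This requires a careful splitting at the finitely many Hölder-breakpoints and uniform bounds on oscillatory integrals $\int V(s)e^{i\varphi(s)}\,ds$ with $\varphi$ close to linear, using only $\|V\|_\infty$ and the Hölder seminorm; the compact support in the \emph{fixed} interval $(-b,b)$ is what makes these bounds uniform in $\lambda$ and keeps $b$ out of the error constants. A secondary technical point is justifying that the $L^2$ eigenfunction's prescribed decay at $\pm\infty$ translates cleanly into the boundary phase $\pi/4$ via the standard Langer/Airy analysis for the $|x|^\alpha$ turning point — this is where $\alpha>0$ (and the resulting turning-point scaling) enters the error exponent $-\frac{\alpha+2}{2\alpha}$.
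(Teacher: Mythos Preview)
Your broad architecture (WKB with Airy connection on $|x|>b$, perturbative expansion on the compact interval, matching) is indeed the skeleton of the paper's argument, but the implementation differs and your plan glosses over the one genuinely non-routine step. The paper does \emph{not} run a global Pr\"ufer phase across $(-b,b)$. Instead it splits at $x=0$: on $[0,b]$ and on $[-b,0]$ it builds solutions $f_\pm$ via Volterra integral equations with Cauchy data $(c_1,c_2)$ at $0$, and on $[b,\infty)$ it imports the $L^2$ solution $y$ (with its asymptotics in terms of $Q(b,\lambda)$ and coefficients $d_1,d_2$) from \cite{MA}. Requiring the Wronskians to vanish at $x=b$ and $x=-b$ gives \emph{two} equations in $\lambda$ and the free parameter $\phi(\lambda)$ defined by $c_1=\cos\phi$, $c_2=\sqrt{\mu}\sin\phi$.

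The point you call ``routine bookkeeping of the half-integer shift'' is precisely where the work lies, and your sketch does not supply a mechanism for it. Since $V$ is not assumed even, the problem has no exact symmetry on $(-b,b)$; you cannot argue ``by $x\mapsto -x$'' or write ``twice the phase over $(-b,b)$''. In Pr\"ufer language your oscillatory correction on $(-b,b)$ is $\frac{1}{2\sqrt\mu}\int(q-q(b))\cos 2\theta(s)\,ds$ with $\theta(s)\approx\sqrt\mu\,s+\theta(0)$, so it produces a mixture $\cos 2\theta(0)\!\int V\cos(2\sqrt\mu s)\,ds-\sin 2\theta(0)\!\int V\sin(2\sqrt\mu s)\,ds$. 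To land on the stated formula with only the cosine integral and a definite coefficient, you must know that $\theta(0)$ (equivalently $\phi(\lambda)$) is asymptotically a multiple of $\pi/2$. The paper extracts exactly this: adding and subtracting the two matching equations and splitting the spectrum into types $\Lambda^D$ (where $|\sin\phi|\ge|\cos\phi|$) and $\Lambda^N$ (otherwise), it shows in each case that the complementary quantity is $O(\lambda^{-1/2})$, i.e.\ the eigenfunctions are asymptotically odd or even. This kills the sine integral (its coefficient becomes $O(\lambda^{-1/2})\cdot\lambda^{-1/2}=O(\lambda^{-1})$) and yields two interlacing quantization relations, at $n\pi$ and $n\pi-\pi/2$, which merge into the single formula with $\frac{\pi}{4}(2n-1)$.

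A second item missing from your plan is the verification that the quantization relation accounts for \emph{all} eigenvalues with the correct index $n$. The paper closes this by comparing the counting function $N(\lambda,H)$ with that of the unperturbed operator $H_0$ (via Kato's perturbation bound and Titchmarsh's Weyl asymptotics) and matching it against the count of solutions $\nu_n$ of the quantization relation. In a Pr\"ufer framework one could hope to get this from Sturm oscillation, but you would still need to justify it through the turning points, which you do not mention.
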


We note that, in some cases, it is easy to express the function $Q(b,\lambda)$ in terms $\{\lambda^s\}$, so that \eqref{asy_gen} can be written more explicitly. In the last section, we give some examples, including a quartic AHO; see also Remark \ref{Remark}.

In Corollary \ref{asy_for_DN}, we consider the sequences of the eigenvalues of the operators in $L_2[0,\infty)$ generated by \eqref{def_patential1} and the Dirichlet and Neumann boundary conditions. We derive their asymptotic formulas, which show that they are interlacing at infinity.

\subsection{Strategy and structure of the paper.}The proof of the theorem uses the idea of \cite{MA} and goes as follows:
 for a sufficiently large $b>0$, we construct solutions, $f_+(x, \lambda)$ and $f_-(x, \lambda)$, of $Hy=\lambda y$ in $[0,b]$ and $[-b,0]$, respectively, satisfying the following boundary conditions
 \[
 f_+(0, \lambda) = f_-(0,\lambda) =  \cos \phi(\lambda), \quad 
 \quad f_+'(0,\lambda) = f_-'(0,\lambda)= \sqrt{\lambda-q(b)} \sin\phi(\lambda)
 \]
 for some $\phi(\lambda) \in [0,2\pi)$. Then we construct (with a different method) a solution, $y(x, \lambda)$,  of $Hy=\lambda y$, that is square-integrable on $[b,\infty)$. A square-integrable solution on $(-\infty, -b]$ can be obtained by a flip $x \mapsto -x$, as the potential, $q(x)$, is symmetric outside the support of the perturbation.

Note that $f_+(x, \lambda)$ can be extended to an $L^2$-solution of $H$ on $[0, \infty)$, if and only if the vector $(f_+(b, \lambda),f'_+(b, \lambda))$ is linearly dependent with $(y(b, \lambda),y'(b, \lambda))$. The linear dependency at points $+b$ and $-b$ gives a system of two equations depending on  $\lambda$ and $\phi(\lambda)$. These two equations imply that as the spectral parameter, $\lambda$, goes to infinity, $\phi(\lambda)$ would be forced to tend either to $0$, or to $\frac{\pi}{2}$.

Note that in the case of the symmetric perturbation, $V(x)$, we would obtain either the equality $\phi = 0$ or $\pi / 2$ straight away, that would correspond to the case of either Dirichlet or Neumann boundary conditions at zero, or, that is the same, would force the solution to be even or odd. So, heuristically we can say that the condition of the potential, $q(x)$, being symmetric at infinity turns out to be strong enough in order to get an asymptotical evenness or oddness of the eigenfunction, as the spectral parameter $\lambda$ tends to infinity.

We split these two cases and obtain an equality on $\lambda$, which holds asymptotically as $\lambda$ tends to infinity, would allows us to obtain the asymptotics of the counting function and the asymptotic behavior of eigenvalues.

It turns out that the aforementioned eigenvalues of "almost odd" and "almost even" (that is, corresponding to the cases $\phi$ is approximately $0$ or $\pi / 2$) eigenfunctions are interlacing.

In order to be sure that the asymptotic equalities allow us to take care of all the eigenvalues of the problem, we give a rough estimate on the asymptotic expansion of the counting function of eigenvalues.

\subsection{Acknowledgements} The first author is grateful to the supervisor of her master's thesis, Vladimir Podolskii, as the article was inspired by the aforementioned master's thesis. The second author was partially supported by the Ministry of Education Science of the Republic of Kazakhstan under the grant AP05132071. We would like to thank Julie Rowlett for reading and commenting upon preliminary version of this manuscript. We are also grateful to Grigori Rozenblum for the attention and useful comments and Simone Murro for helpful discussions.

\section{Preliminaries}
In this section, we first prove that $H$, defined as in \eqref{def_patential}, is a self-adjoint operator. Next, we construct the solutions of 
\begin{equation}\label{diff_eq}
-y''(x,\lambda)+\left(\sum_{j=1}^{N}a_j|x|^{\alpha_j} + V(x)\right)y(x,\lambda)=\lambda y(x,\lambda)
\end{equation}

in $[0, b]$ with certain  boundary conditions at $x = 0$, and in $[b, \infty)$ under the condition that the solution is square-integrable. Above, $b > 0$ is  such that the perturbation, $V$, is compactly supported in $(-b,b)$. Finally, we study the asymptotic behavior of these solutions at a point $b$ as the spectral parameter, $\lambda$, tends to infinity.

Let $\{a_j\}_{j=1}^{N}$, $\{\alpha_j\}_{j=1}^{N}$ be sets of positive numbers and $V(x)$ be a bounded, real valued function supported inside $(-b,b)$. We also require $V(x)$ to be a piecewise H{\"o}lder continuous function with an exponent $\tau>0$, that is there exist $C>0$ and $-\infty=x_0<x_1...<x_m=+\infty$, $m\in\mathbb{N}$, such that
\begin{equation*}
\left|V(x)-V(y)\right|\leq C|x-y|^{\tau},
\end{equation*}
for all $x,y\in (x_j,x_{j+1})$ and $j=0,...,m$. Consider the differential expression \eqref{def_patential}. Note that the operator of multiplication by $V(x)$ is symmetric and bounded, whilst the operator associated with the non perturbed AHO in $C_{c}^{\infty}(\mathbb{R})$ is essentially self-adjoint; see \cite{LS}. The Rellich-Kato theorem \cite[Theorem 4.4]{Kato} implies that the operator defined by \eqref{def_patential} in $C_{c}^{\infty}(\mathbb{R})$ is essentially self-adjoint as well and hence has a unique self-adjoint extension, which we denote by $H$.

\subsection{Construction of solutions outside the support of the perturbation}
We start by considering \eqref{diff_eq} in $[b,\infty)$. Note that in this interval the perturbation,  $V(x)$, equals zero and hence \eqref{diff_eq} becomes
\begin{equation}\label{eq_on_b_infty}
     -y''(x,\lambda)+\sum_{j=1}^{N}a_j|x|^{\alpha_j}y(x,\lambda)=\lambda y(x,\lambda).
\end{equation}
Its solutions have already been studied in \cite{MA}, so that we recall  some of their results.

Let $y(x,\lambda)$ satisfy \eqref{eq_on_b_infty} and $Q(x,\lambda)$ be the function defined as in Theorem \ref{gen_th}. Define the function
\begin{equation*}
	\eta(x,\lambda):=|\lambda-q(x)|^{1/4}y(x,\lambda).
\end{equation*}
 According to \cite[Lemma 2]{MA}, it follows that
\begin{equation}\label{eta}
\begin{gathered}
\eta(b, \lambda) = d_1(\lambda) \sin\left(Q(b, \lambda) + \frac{\pi}{4}\right) - d_2(\lambda) \cos\left(Q(b, \lambda)+\frac{\pi}{4}\right) 
+ O\left(\lambda^{-\frac{\alpha+2}{\alpha}}\right),\\
\end{gathered}
\end{equation}
\begin{align}\label{eta_prime}
\begin{split}
\eta'(b, &\lambda) = \left. \frac{\partial}{\partial x} \eta(x, \lambda) \right|_{x = b}\\
& = - \sqrt{\mu(\lambda)} \left[ d_1(\lambda)\cos\left(Q(b, \lambda) + \frac{\pi}{4}\right) +
d_2(\lambda) \sin\left(Q(b,\lambda)+\frac{\pi}{4}\right)\right] +O\left(\lambda^{-\frac{\alpha+4}{2\alpha}}\right)
\end{split}
\end{align}
as $\lambda\rightarrow\infty$, where $\mu(\lambda):=\lambda-q(b)$ and $d_1(\lambda)$, $d_2(\lambda)$ are  given explicitly in terms of Airy functions. Moreover, by \cite[(36)]{MA}, as $\lambda\rightarrow\infty$,
\begin{equation}\label{asympt_for_d}
d_1(\lambda)=1+O(\lambda^{-\frac{\alpha+2}{2\alpha}}),
\qquad
d_2(\lambda)=O(\lambda^{-\frac{\alpha+2}{2\alpha}}).
\end{equation}

\subsection{Construction of solutions inside the support of the perturbation}
Next we study the differential equation \eqref{diff_eq} in $[0,b]$ with the  boundary conditions
\begin{equation*}
y(0)=c_1=c_1(\lambda),\qquad
y'(0)=c_2=c_2(\lambda).
\end{equation*}
Note its solutions are in one-to-one correspondence with the solutions of the following integral equation:\footnote{We have chosen the notation in such a way that $f_+$ corresponds to a segment of a positive half-line, $[0, b]$,  and $f_-$ corresponds to a segment of a negative half-line,  $[-b,0]$.}
\begin{align}\label{eleven_a}
\begin{split}
f_{+}(x,\lambda)=&
c_1\cos\sqrt{\mu(\lambda)}x+c_2\frac{\sin\sqrt{\mu(\lambda)}x}{\sqrt{\mu(\lambda)}}\\
&+\frac{1}{\sqrt{\mu(\lambda)}}\int_{0}^{x}\sin(\sqrt{\mu(\lambda)}(x-s))[q(s)-q(b)]f_{+}(s,\lambda)ds.
\end{split}
\end{align}
This is a Volterra equation, and thus has a unique solution, $f_+(x,\lambda)$; see \cite[page 398]{E}. Differentiating it, we obtain
\begin{align}\label{eleven_a_der}
\begin{split}
f_{+}'(x,\lambda)=&
-c_1\sqrt{\mu(\lambda)}\sin\sqrt{\mu(\lambda)}x+c_2\cos\sqrt{\mu(\lambda)}x\\
&+\int_{0}^{x}\cos(\sqrt{\mu(\lambda)}(x-s))[q(s)-q(b)]f_{+}(s,\lambda)ds.
\end{split}
\end{align}

We write \eqref{eleven_a} and \eqref{eleven_a_der} in the following way
\begin{align}\label{eq15}
\begin{split}
f_{+}(b,\lambda)&=\cos(\sqrt{\mu(\lambda)}b)\left[c_1-\frac{1}{\sqrt{\mu(\lambda)}}\int_{0}^{b}\sin(\sqrt{\mu(\lambda)}s)[q(s)-q(b)]f_{+}(s,\lambda)ds\right] \\
&+\sin(\sqrt{\mu(\lambda)}b)\left[\frac{c_2}{\sqrt{\mu(\lambda)}}+\frac{1}{\sqrt{\mu(\lambda)}}\int_{0}^{b}\cos(\sqrt{\mu(\lambda)}s)[q(s)-q(b)]f_{+}(s,\lambda)ds\right].
\end{split}
\end{align}
and
\begin{align}\label{eq16}
\begin{split}
f_{+}'(b,\lambda) & =\cos(\sqrt{\mu(\lambda)}b)\left[c_2+\int_{0}^{b}\cos(\sqrt{\mu(\lambda)}s)[q(s)-q(b)]f_{+}(s,\lambda)ds\right] \\
& +\sin(\sqrt{\mu(\lambda)}b)\left[-c_1\sqrt{\mu(\lambda)}+\int_{0}^{b}\sin(\sqrt{\mu(\lambda)}s)[q(s)-q(b)]f_{+}(s,\lambda)ds \right].
\end{split}
\end{align}

Define
\begin{equation*}
k^{+}_1(\lambda):=\int_{0}^{b}\cos(\sqrt{\mu(\lambda)}s)[q(s)-q(b)]f_{+}(s,\lambda)ds,
\end{equation*}
\begin{equation*}
k^{+}_2(\lambda):=\int_{0}^{b}\sin(\sqrt{\mu(\lambda)}s)[q(s)-q(b)]f_{+}(s,\lambda)ds.
\end{equation*}
Hence \eqref{eq15} and \eqref{eq16} can be rewritten as
\begin{equation}\label{f_plus_f_prime_plus}
	\begin{gathered}
f_{+}(b,\lambda)=\cos(\sqrt{\mu(\lambda)}b)\left(c_1-\frac{k^{+}_2(\lambda)}{\sqrt{\mu(\lambda)}}\right)+\sin(\sqrt{\mu(\lambda)}b)\left(\frac{c_2}{\sqrt{\mu(\lambda)}}+\frac{k^+_1(\lambda)}{\sqrt{\mu(\lambda)}}\right),\\
f_{+}'(b,\lambda)=\cos(\sqrt{\mu(\lambda)}b)\left(c_2+k^{+}_1(\lambda)\right)+\sin(\sqrt{\mu(\lambda)}b)\left(-c_1\sqrt{\mu(\lambda)}+k^+_2(\lambda)\right).
\end{gathered}
\end{equation}

Next, we consider \eqref{diff_eq} in $[-b,0]$. This is equivalent to considering a "reflected" equation
\begin{equation}\label{eq_on_b_0}
-y''(x)+q(-x)y(x)=\lambda y(x),\qquad x\in[0,b],\\
\end{equation}
with boundary conditions
\begin{equation*}
y(0)=c_1=c_1(\lambda),\qquad
y'(0)=-c_2=c_2(\lambda).
\end{equation*}

Similarly to \eqref{eleven_a}, we construct the solution of
\begin{align*}
f_{-}(x,\lambda)&=
c_1\cos\sqrt{\mu(\lambda)}x-c_2\frac{\sin\sqrt{\mu(\lambda)}x}{\sqrt{\mu(\lambda)}}\\
&+\frac{1}{\sqrt{\mu(\lambda)}}\int_{0}^{x}\sin(\sqrt{\mu(\lambda)}(x-s))[q(-s)-q(-b)]f_{-}(s,\lambda)ds
\end{align*}
and derive
\begin{align*}
	f_{-}(b,\lambda)&=\cos(\sqrt{\mu(\lambda)}b)\left(c_1-\frac{k^{-}_2(\lambda)}{\sqrt{\mu(\lambda)}}\right)+\sin(\sqrt{\mu(\lambda)}b)\left(-\frac{c_2}{\sqrt{\mu(\lambda)}}+\frac{k^-_1(\lambda)}{\sqrt{\mu(\lambda)}}\right), \\	f_{-}'(b,\lambda)&=\cos(\sqrt{\mu(\lambda)}b)\left(-c_2+k^{-}_1(\lambda)\right)+\sin(\sqrt{\mu(\lambda)}b)\left(-c_1\sqrt{\mu(\lambda)}+k^-_2(\lambda)\right),
\end{align*}
where
\begin{equation*}
	k^{-}_1(\lambda):=\int_{0}^{b}\cos(\sqrt{\mu(\lambda)}s)[q(-s)-q(-b)]f_{-}(s,\lambda)ds,
\end{equation*}
\begin{equation*}
	k^{-}_2(\lambda):=\int_{0}^{b}\sin(\sqrt{\mu(\lambda)}s)[q(-s)-q(-b)]f_{-}(s,\lambda)ds.
\end{equation*}

In the following two lemmas we investigate the asymptotic behavior of the functions $f_{\pm}(s,\lambda)$, $k_1^{\pm}(\lambda)$ and $k_2^{\pm}(\lambda)$  as $\lambda\rightarrow+\infty$.

\begin{lemma}\label{f+est}
	The solutions $f_{\pm}(s,\lambda)$ satisfy, as $\lambda\rightarrow+\infty$,
	\begin{equation*}
		f_{\pm}(x,\lambda) = c_1(\cos\sqrt{\mu(\lambda)}x)\pm c_2\frac{\sin(\sqrt{\mu(\lambda)}x)}{\sqrt{\mu(\lambda)}} + c_1O\left(\lambda^{-\frac{1}{2}}\right) + c_2  O\left(\lambda^{-1}\right).
	\end{equation*}
\end{lemma}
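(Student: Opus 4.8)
The plan is to bootstrap from the Volterra integral equation \eqref{eleven_a} itself. The first step is a crude a priori bound: using the explicit form of the kernel, $|\sin(\sqrt{\mu}(x-s))| \le 1$, and the fact that $q(s)-q(b)$ is bounded on the compact interval $[0,b]$ (say by a constant $M$ depending only on $b$ and the $a_j$, $\alpha_j$), a standard Gr\"onwall-type argument applied to \eqref{eleven_a} yields
\begin{equation*}
  |f_+(x,\lambda)| \le \bigl(|c_1| + |c_2|/\sqrt{\mu(\lambda)}\bigr)\, e^{Mb/\sqrt{\mu(\lambda)}} \le C\bigl(|c_1| + |c_2|\,\lambda^{-1/2}\bigr)
\end{equation*}
for all $x \in [0,b]$ and $\lambda$ large, since $\mu(\lambda) = \lambda - q(b) \sim \lambda$. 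The same estimate holds for $f_-$ with the reflected potential, which is likewise bounded on $[0,b]$.

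The second step is to feed this bound back into the right-hand side of \eqref{eleven_a}. The first two terms are exactly $c_1\cos(\sqrt{\mu}x) \pm c_2 \sin(\sqrt{\mu}x)/\sqrt{\mu}$. For the integral term, I would estimate
\begin{equation*}
  \left|\frac{1}{\sqrt{\mu(\lambda)}}\int_0^x \sin(\sqrt{\mu(\lambda)}(x-s))[q(s)-q(b)]f_+(s,\lambda)\,ds\right| \le \frac{Mb}{\sqrt{\mu(\lambda)}}\,\sup_{[0,b]}|f_+(\cdot,\lambda)|,
\end{equation*}
and then substitute the a priori bound from Step 1. This produces a contribution of size $O(\lambda^{-1/2})\cdot(|c_1| + |c_2|\lambda^{-1/2}) = c_1 O(\lambda^{-1/2}) + c_2 O(\lambda^{-1})$, which is precisely the claimed remainder. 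The argument for $f_-$ is identical after replacing $q(\cdot)$ by $q(-\cdot)$ and $c_2$ by $-c_2$, accounting for the $\pm$ in the statement.

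I do not expect a genuine obstacle here; the lemma is a soft consequence of the Volterra structure, and the only mild care needed is to keep the dependence on $c_1$ and $c_2$ separated (rather than lumping them into a single $|c_1|+|c_2|$), so that the two remainder terms carry the stated powers of $\lambda$. One should also note that $\mu(\lambda) > 0$ for $\lambda$ large (indeed $\sqrt{\mu(\lambda)}$ is the relevant oscillation frequency), so all the square roots and the division by $\sqrt{\mu(\lambda)}$ make sense; this is implicit in the setup since we only care about the regime $\lambda \to +\infty$. If one wanted to be slightly more economical, the bound $e^{Mb/\sqrt{\mu(\lambda)}} \le 2$ for $\lambda$ large absorbs the exponential factor into the constant, so the final estimate reads $|f_\pm(x,\lambda)| \le 2(|c_1| + |c_2|\lambda^{-1/2})$ up to a harmless adjustment of constants, and the error term in the lemma follows immediately.
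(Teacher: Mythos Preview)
Your proposal is correct and follows essentially the same two-step scheme as the paper: first obtain an a~priori bound $\|f_\pm(\cdot,\lambda)\|_\infty = O(|c_1| + |c_2|\lambda^{-1/2})$, then feed it back into the integral term of the Volterra equation to produce the claimed remainder. The only cosmetic difference is that the paper phrases step one via the operator norm of the Volterra operator $H_\lambda$ (citing $\|H_\lambda\|_{op}=O(\lambda^{-1/2})$ and solving $\|f_+\|_\infty \le (|c_1|+|c_2|/\sqrt{\mu})/(1-\|H_\lambda\|_{op})$), whereas you use Gr\"onwall's inequality to get the exponential bound; both yield the same $O(1)$ constant and the rest is identical.
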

\begin{proof}
	Denote by $H_{\lambda}$ the operator
	$$
	H_{\lambda} : u \mapsto \mu(\lambda)^{-1/2} \int_0^x [q(s) - q(b)] \sin(\sqrt{\mu(\lambda)}(x-s)) u(s) ds
	$$
	in $C[0,b]$. Then
	\begin{equation}\label{f+viaj}
	f_{+}(x,\lambda) = c_1\cos(\sqrt{\mu(\lambda)}x)+c_2\frac{\sin(\sqrt{\mu(\lambda)}x)}{\sqrt{\mu(\lambda)}} + (H_{\lambda} f_{+})(x).
	\end{equation}
	By the triangle inequality
	\begin{equation}
	\|f_{+}(\cdot ,\lambda)\|_{\infty}\leq  |c_1| + \frac{|c_2|}{\sqrt{\mu(\lambda)}}  + \|H_{\lambda}\|_{op}\cdot\|f_{+}(\cdot, \lambda)\|_{\infty},
	\end{equation}
	where $\|\cdot\|_{\infty}$ is the uniform norm and $\|\cdot\|_{op}$ is the operator norm. By \cite[ (30)]{MA}, $\|H_{\lambda}\|_{op} = O(\lambda^{-\frac{1}{2}})$ as $\lambda \to \infty$, hence for sufficiently large $\lambda$, $1- \| H_ \lambda\|_{op} > 0$ and therefore
	\begin{equation}\label{f+est_1}
	\|f_{+}(\cdot ,\lambda)\|_{\infty} \leq  \frac{|c_1| + \frac{|c_2|}{\sqrt{\mu(\lambda)}} }{1- \| H_ \lambda\|_{op}}.
	\end{equation}	
	 Moreover, \eqref{f+est_1} implies that
	\begin{equation*}
		\| f_+(\cdot, \lambda)\|_{\infty}=c_1 O(1) + \frac{c_2}{\sqrt{\mu(\lambda)}}  O(1), \qquad \lambda \to \infty,
	\end{equation*}
	which together with \eqref{f+viaj} gives
	\begin{equation*}
	\begin{gathered}
	\left\| f_{+}(\cdot,\lambda) - c_1\cos(\sqrt{\mu(\lambda)}(\cdot) ) - c_2\frac{\sin(\sqrt{\mu(\lambda)} (\cdot) )}{\sqrt{\mu(\lambda)}} \right\|_\infty  \le \| H_{\lambda}\|_{op} \cdot || f_+ (\cdot , \lambda) ||_\infty\\
    = c_1  O\left(\lambda^{-\frac{1}{2}}\right) + c_2 O\left(\lambda^{-1}\right).
	\end{gathered}
    \end{equation*}
\end{proof}
Next we derive asymptotic formulas for the functions $k_1^{\pm}(\lambda)$ and $k_2^{\pm}(\lambda)$.
\begin{lemma}\label{k_pus_k}
	The functions $k_1^{\pm}(\lambda)$ and $k_2^{\pm}(\lambda)$ satisfy the following asymptotic formulas, as $\lambda\rightarrow \infty$,
	\begin{equation}\label{k_pus_k_alt_1}
	k_1^{\pm}(\lambda)=\frac{c_1}{2}\int_{0}^{b}\left(q(\pm s)-q(\pm b)\right)ds+c_1O\left(\lambda^{-\frac{\tau}{2}}\right)+\frac{c_2}{\sqrt{\mu(\lambda)}}O\left(\lambda^{-\frac{\tau}{2}}\right),
	\end{equation}
	\begin{equation}\label{k_pus_k_alt_2}
	k_2^{\pm}(\lambda)=\frac{\pm c_2}{2\sqrt{\mu(\lambda)}}\int_{0}^{b}\left(q(\pm s)-q(\pm b)\right)ds+c_1O\left(\lambda^{-\frac{\tau}{2}}\right)+\frac{c_2}{\sqrt{\mu(\lambda)}}O\left(\lambda^{-\frac{\tau}{2}}\right).
	\end{equation}
\end{lemma}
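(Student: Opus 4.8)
The strategy is to substitute the expansion of $f_{\pm}$ from Lemma~\ref{f+est} into the definitions of $k_1^{\pm}(\lambda)$, $k_2^{\pm}(\lambda)$, to turn the resulting products of trigonometric functions into sums via $\cos^2\theta=\frac12(1+\cos2\theta)$, $\sin^2\theta=\frac12(1-\cos2\theta)$ and $\sin\theta\cos\theta=\frac12\sin2\theta$, and then to estimate the oscillatory integrals that appear. Throughout we may assume $0<\tau\le1$: a function H\"older continuous on an interval with exponent larger than $1$ is constant there, so $\tau$ can always be lowered to $1$.

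First I would insert the uniform (in $s\in[0,b]$) expansion $f_+(s,\lambda)=c_1\cos(\sqrt{\mu(\lambda)}\,s)+\frac{c_2}{\sqrt{\mu(\lambda)}}\sin(\sqrt{\mu(\lambda)}\,s)+c_1O(\lambda^{-1/2})+c_2O(\lambda^{-1})$ of Lemma~\ref{f+est} into $k_1^+(\lambda)=\int_0^b\cos(\sqrt{\mu(\lambda)}\,s)[q(s)-q(b)]f_+(s,\lambda)\,ds$. Since $q(s)-q(b)$ is bounded on $[0,b]$, the remainder of $f_+$ contributes $c_1O(\lambda^{-1/2})+c_2O(\lambda^{-1})$, which---because $\tau\le1$---is contained in $c_1O(\lambda^{-\tau/2})+\frac{c_2}{\sqrt{\mu(\lambda)}}O(\lambda^{-\tau/2})$, while the main part equals
\[
\frac{c_1}{2}\int_0^b[q(s)-q(b)]\,ds+\frac{c_1}{2}\int_0^b\cos(2\sqrt{\mu(\lambda)}\,s)[q(s)-q(b)]\,ds+\frac{c_2}{2\sqrt{\mu(\lambda)}}\int_0^b\sin(2\sqrt{\mu(\lambda)}\,s)[q(s)-q(b)]\,ds.
\]
The same manipulation applied to $k_2^+(\lambda)=\int_0^b\sin(\sqrt{\mu(\lambda)}\,s)[q(s)-q(b)]f_+(s,\lambda)\,ds$ isolates the main term $\frac{c_2}{2\sqrt{\mu(\lambda)}}\int_0^b[q(s)-q(b)]\,ds$ plus two oscillatory integrals of the same type; the cases $k_i^-$ follow verbatim with $q(\pm s)$ in place of $q(s)$ and the sign of the $c_2$-term flipped---this flip being exactly the $\pm$ in the statement---using that $q(-\cdot)$ is again piecewise H\"older with exponent $\tau$ on $[0,b]$ and has the same smooth part. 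Hence the whole lemma reduces to the single estimate
\[
\int_0^b[q(\pm s)-q(\pm b)]\,e^{i\omega s}\,ds=O(\omega^{-\tau})\quad\text{as }\omega\to\infty,
\]
applied with $\omega=2\sqrt{\mu(\lambda)}\asymp\sqrt{\lambda}$, which turns the oscillatory integrals above into $O(\lambda^{-\tau/2})$ after factoring out $\frac{c_1}{2}$, resp.\ $\frac{c_2}{2\sqrt{\mu(\lambda)}}$.

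To prove this estimate I would split $q(\pm s)-q(\pm b)$ on $[0,b]$ into the smooth part $g_0(s)=\sum_j a_j(s^{\alpha_j}-b^{\alpha_j})$ and the perturbation $V(\pm s)$, recalling $V(\pm b)=0$. For $g_0$, integration by parts suffices: $g_0$ is continuous on $[0,b]$ with $g_0(b)=0$ and $g_0'(s)=\sum_j a_j\alpha_j s^{\alpha_j-1}$ is integrable on $[0,b]$ because every $\alpha_j-1>-1$, so $\int_0^b g_0(s)e^{i\omega s}\,ds=\frac{1}{i\omega}\big(g_0(b)e^{i\omega b}-g_0(0)\big)-\frac{1}{i\omega}\int_0^b g_0'(s)e^{i\omega s}\,ds=O(\omega^{-1})=O(\omega^{-\tau})$. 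For $V(\pm\cdot)$, on each of the finitely many closed subintervals $[p,p']\subseteq[0,b]$ on which it agrees a.e.\ with a genuine H\"older-$\tau$ function I would use the half-period shift: since $e^{i\omega(s+\pi/\omega)}=-e^{i\omega s}$, averaging the integral with its translate gives, for $\omega$ large,
\[
2\int_p^{p'}V(\pm s)e^{i\omega s}\,ds=\int_p^{p+\pi/\omega}V(\pm s)e^{i\omega s}\,ds-\int_{p'}^{p'+\pi/\omega}V(\pm(t-\pi/\omega))e^{i\omega t}\,dt+\int_{p+\pi/\omega}^{p'}\big[V(\pm s)-V(\pm(s-\pi/\omega))\big]e^{i\omega s}\,ds,
\]
whose modulus is at most $2\|V\|_\infty\,\pi/\omega+(p'-p)C(\pi/\omega)^\tau$, so $\int_p^{p'}V(\pm s)e^{i\omega s}\,ds=O(\omega^{-\tau})$; summing over the finitely many pieces preserves this. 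Assembling everything, the leading terms come out as stated and every remaining contribution is $c_1O(\lambda^{-\tau/2})+\frac{c_2}{\sqrt{\mu(\lambda)}}O(\lambda^{-\tau/2})$, which is exactly \eqref{k_pus_k_alt_1}--\eqref{k_pus_k_alt_2}. The genuinely delicate step is this oscillatory-integral estimate for the non-smooth part---running the half-period shift uniformly over all H\"older pieces and checking that the finitely many interface jumps cost only an extra $O(\omega^{-1})$, which is harmless precisely because $\tau\le1$.
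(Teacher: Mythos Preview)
Your argument is correct and follows the same route as the paper: substitute the expansion of $f_\pm$ from Lemma~\ref{f+est}, use the product-to-sum trig identities, and then estimate the oscillatory integrals $\int_0^b[q(\pm s)-q(\pm b)]e^{2i\sqrt{\mu}s}\,ds$ as $O(\lambda^{-\tau/2})$. The only difference is cosmetic: where the paper simply cites \cite[p.~92]{SR} for this last estimate, you supply it directly by splitting off the smooth part (integration by parts, $O(\omega^{-1})$) and handling the H\"older perturbation via the half-period shift; your explicit reduction to $\tau\le1$ also makes transparent why the $c_1O(\lambda^{-1/2})+c_2O(\lambda^{-1})$ remainder from Lemma~\ref{f+est} is absorbed into the stated error.
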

\begin{proof}
	Lemma \ref{f+est} implies that
	\begin{align}\label{k1qwseqwe}
		\begin{split}
		k_1^{+}(\lambda&)  =  \frac{c_1}{2}\int_{0}^{b}[q( s)-q( b)]ds
		+\frac{c_1}{2}\int_{0}^{b}[q( s)-q( b)]\cos(2\sqrt{\mu(\lambda)}s)ds \\
	& +\frac{c_2}{2\sqrt{\mu(\lambda)}}\int_{0}^{b}[q( s)-q( b)]\sin(2\sqrt{\mu(\lambda)}s)ds  +c_1O\left(\lambda^{-\frac{1}{2}}\right)+c_2O\left(\lambda^{-1}\right).
	\end{split}
	\end{align}
	Since $q(x)$ is a piecewise H{\"o}lder continuous function with an exponent $\tau$, by \cite[page 92]{SR}, we obtain
	\begin{align}\label{asurdhzukzhkujh}
	\begin{split}
		\int_{0}^{b}[q( s)-q( b)]\cos(2\sqrt{\mu(\lambda)}s)ds&=O\left(\lambda^{-\frac{\tau}{2}}\right),\\
		\int_{0}^{b}[q( s)-q( b)]\sin(2\sqrt{\mu(\lambda)}s)ds&=O\left(\lambda^{-\frac{\tau}{2}}\right)
	\end{split}
	\end{align}
	as $\lambda\rightarrow\infty$. Combining \eqref{asurdhzukzhkujh} with  \eqref{k1qwseqwe}, we derive \eqref{k_pus_k_alt_1}. Similarly, one  proves \eqref{k_pus_k_alt_2}.
\end{proof}
We also will need sharper asymptotics for $k_1^++k_1^-$ and $k_2^+-k_2^-$. By \eqref{k1qwseqwe}, we derive
\begin{align}\label{K1minusK1}
	\begin{split}
		k_1^+(\lambda)+k_1^-&(\lambda)=\frac{c_1}{2}\int_{-b}^{b}(q(s)-q(b))ds+\frac{c_1}{2}\int_{-b}^{b}(q(s)-q(b))\cos(2\sqrt{\mu(\lambda)}s)ds\\
		&+\frac{c_2}{2\sqrt{\mu(\lambda)}}\int_{-b}^{b}(q(s)-q(b)\sin(2\sqrt{\mu(\lambda)}s)ds+c_1O\left(\lambda^{-\frac{1}{2}}\right)+c_2O\left(\lambda^{-1}\right).
	\end{split}
\end{align}
Similarly, we obtain
\begin{align}\label{K2minusK2}
	\begin{split}
		k_2^+(\lambda)-k_2^-&(\lambda)=\frac{c_2}{2\sqrt{\mu(\lambda)}}\int_{-b}^{b}(q(s)-q(b))ds+\frac{c_1}{2}\int_{-b}^{b}(q(s)-q(b))\sin(2\sqrt{\mu(\lambda)}s)ds\\
		&+\frac{c_2}{2\sqrt{\mu(\lambda)}}\int_{-b}^{b}(q(s)-q(b))\cos(2\sqrt{\mu(\lambda)}s)ds+c_1O\left(\lambda^{-\frac{1}{2}}\right)+c_2O\left(\lambda^{-1}\right).
	\end{split}
\end{align}

\section{Proof of Theorems \ref{main_th} and \ref{gen_th}}
In this section we use the notations introduced in the previous sections. Moreover, we skip writing arguments for the following functions, where $j=1,2$,
\begin{equation*}
	Q:=Q(b,\lambda),
	\quad
	\mu:=\mu(\lambda),
	\quad
	d_j:=d_j(\lambda),
	\quad
	k_j^{\pm}:=k_j^{\pm}(\lambda).
\end{equation*}

We start by proving Theorem \ref{gen_th}.
\begin{proof}
For fixed $c_1, c_2 \in \R$, the solutions $f_+(x,\lambda)$ and $y(x,\lambda)$ are linearly dependent, provided that $\lambda$ is an eigenvalue of $H$ in $L^2(\mathbb{R})$. This gives us the following equation for the eigenvalues
\begin{equation*}
f'_+(b,\lambda)y(b,\lambda)-f_+(b,\lambda)y'(b,\lambda)=0,\\
\end{equation*}
or
\begin{equation*}
f'_+(b,\lambda)\eta(b,\lambda)-f_+(b,\lambda)\eta'(b,\lambda)=(4\mu)^{-1}q'(b)\eta(b,\lambda)f_+(b,\lambda).
\end{equation*}
By inserting \eqref{eta}, \eqref{eta_prime} and \eqref{f_plus_f_prime_plus} into the previous equation and using Lemma \ref{f+est}, we obtain
\begin{align*}
	&\Big[\cos(\sqrt{\mu}b)\left(c_2+k^{+}_1\right)+\sin(\sqrt{\mu}b)\left(-c_1\sqrt{\mu}+k^+_2\right)\Big]\\&
	\times\left[d_1\sin\left(Q+\frac{\pi}{4}\right)-d_2\cos\left(Q+\frac{\pi}{4}\right)+O\left(\lambda^{-\frac{\alpha+2}{\alpha}}\right)\right] \\&
	+\Big[\cos(\sqrt{\mu}b)\left(c_1\sqrt{\mu}-k^+_2\right)+\sin(\sqrt{\mu}b)\left(c_2+k^{+}_1\right)\Big]\\&
	\times\left[d_1\cos\left(Q+\frac{\pi}{4}\right)+d_2\sin\left(Q+\frac{\pi}{4}\right)+O(\lambda^{-\frac{\alpha+2}{\alpha}})\right]\\&
	=\cos(\sqrt{\mu}b)\sin\left(Q+\frac{\pi}{4}\right)\Big(d_1\left(c_2+k_1^+\right)+d_2\left(c_1\sqrt{\mu}-k_2^+\right)\Big) \\&
	+\sin(\sqrt{\mu}b)\cos\left(Q+\frac{\pi}{4}\right)\Big(d_1\left(c_2+k_1^+\right)+d_2\left(c_1\sqrt{\mu}-k_2^+\right)\Big)\\&
	+\cos(\sqrt{\mu}b)\cos\left(Q+\frac{\pi}{4}\right)\Big(d_1\left(c_1\sqrt{\mu}-k_2^+\right)-d_2\left(c_2+k_1^+\right)\Big)\\&
	+\sin(\sqrt{\mu}b)\sin\left(Q+\frac{\pi}{4}\right)\Big(-d_1\left(c_1\sqrt{\mu}-k_2^+\right)+d_2\left(c_2+k_1^+\right)\Big)\\&
	+c_1\sqrt{\mu}O\left(\lambda^{-\frac{\alpha+2}{\alpha}}\right)+c_2O\left(\lambda^{-\frac{\alpha+2}{\alpha}}\right)\\&
		=c_1O\left(\lambda^{-1}\right)+c_2O\left(\lambda^{-\frac{3}{2}}\right).
\end{align*}
Hence
\begin{align}\label{eq_plus}
	\begin{split}
&\sin\left(Q+\frac{\pi}{4}+\sqrt{\mu}b\right)\Big(d_1\left(c_2+k_1^+\right)+d_2\left(c_1\sqrt{\mu}-k_2^+\right)\Big)\\&
	+\cos\left(Q+\frac{\pi}{4}+\sqrt{\mu}b\right)\Big(d_1\left(c_1\sqrt{\mu}-k_2^+\right)-d_2\left(c_2+k_1^+\right)\Big)\\&
	=c_1\sqrt{\mu}O\left(\lambda^{-\frac{\alpha+2}{\alpha}}\right)+c_2O\left(\lambda^{-\frac{\alpha+2}{\alpha}}\right)+c_1O\left(\lambda^{-1}\right)+c_2O\left(\lambda^{-\frac{3}{2}}\right).
	\end{split}
\end{align}

By the same arguments, but for the solutions $f_-(-x,\lambda)$ and $y(-x,\lambda)$, we obtain
\begin{align}\label{eq_minus}
	\begin{split}
&\sin\left(Q+\frac{\pi}{4}+\sqrt{\mu}b\right)\Big(d_1\left(-c_2+k_1^-\right)+d_2\left(c_1\sqrt{\mu}-k_2^-\right)\Big) \\&
	+\cos\left(Q+\frac{\pi}{4}+\sqrt{\mu}b\right)\Big(d_1\left(c_1\sqrt{\mu}-k_2^-\right)-d_2\left(-c_2+k_1^-\right)\Big) \\&
	=c_1\sqrt{\mu}O\left(\lambda^{-\frac{\alpha+2}{\alpha}}\right)+c_2O\left(\lambda^{-\frac{\alpha+2}{\alpha}}\right)+c_1O\left(\lambda^{-1}\right)+c_2O\left(\lambda^{-\frac{3}{2}}\right).
	\end{split}
\end{align}
Now we have two equations \eqref{eq_plus} and \eqref{eq_minus} and variables $\lambda$, $c_1=c_1(\lambda)$ and $c_2=c_2(\lambda)$. Without lost of generality, we assume that the eigenfunction, corresponding to $\lambda$, is normalized in the sence that
\begin{equation*}
	c_1 = \cos \phi(\lambda)
	\qquad c_2 = \sqrt{\mu} \sin\phi(\lambda)
\end{equation*}
for some $\phi(\lambda)\in [0,2\pi)$. Subtracting \eqref{eq_minus} from \eqref{eq_plus} and adding \eqref{eq_minus} to \eqref{eq_plus} give
\begin{align}\label{eq_for_D}
	\begin{split}
		&\sin\left(Q+\frac{\pi}{4}+\sqrt{\mu}b\right)\Big(2d_1c_2+d_1\left(k_1^+-k_1^-\right)-d_2\left(k_2^+-k_2^-\right)\Big)\\
		&-\cos\left(Q+\frac{\pi}{4}+\sqrt{\mu}b\right)\Big(2d_2c_2+d_1\left(k_2^+-k_2^-\right)+d_2\left(k_1^+-k_1^-\right)\Big)\\
		&=O\left(\lambda^{-\frac{\alpha+4}{2\alpha}}\right)+O\left(\lambda^{-1}\right)
	\end{split}
\end{align}
and
\begin{align}\label{eq_for_N}
	\begin{split}
		&\sin\left(Q+\frac{\pi}{4}+\sqrt{\mu}b\right)
		\Big(2d_2\sqrt{\mu}c_1+d_1\left(k_1^++k_1^-\right)-d_2\left(k_2^++k_2^-\right)\Big)\\
		&+\cos\left(Q+\frac{\pi}{4}+\sqrt{\mu}b\right)
		\Big(2d_1\sqrt{\mu}c_1-d_1\left(k_2^++k_2^-\right)-d_2\left(k_1^++k_1^-\right)\Big)\\
		&=O\left(\lambda^{-\frac{\alpha+4}{2\alpha}}\right)+O\left(\lambda^{-1}\right).
	\end{split}
\end{align}

We will distinguish two types of solutions, $\Lambda^D$ and $\Lambda^N$, of the system of equations \eqref{eq_for_D}-\eqref{eq_for_N}: we say that solution $\lambda\in \Lambda^D$ if $|\sin\phi(\lambda)|\geq|\cos\phi(\lambda)|$ and $\lambda\in \Lambda^N$ otherwise. Let $\{\hat{\lambda}_k\}_{k=1}^{\infty}$ and $\{\check{\lambda}_k\}_{k=1}^{\infty}$ be the increasing sequences of solutions of types $\Lambda^D$ and $\Lambda^N$,  respectively. We consider two cases: 1) $\lambda\in \Lambda^D$; 2) $\lambda\in \Lambda^N$.

\textbf{Case 1.} Assume $\lambda\in \Lambda^D$, so that $|\sin\phi(\lambda)|\geq 1/\sqrt{2}$. Therefore \eqref{eq_for_D}, together with Lemma \ref{k_pus_k} and \eqref{asympt_for_d}, implies
\begin{equation*}
\sin\left(Q+\frac{\pi}{4}+\sqrt{\mu}b\right)\sqrt{\mu}\sin\phi(\lambda)+O(1)=O(\lambda^{-\frac{\alpha+4}{2\alpha}})+O(\lambda^{-1}).
\end{equation*}
Since $|\sin\phi(\lambda)|\geq 1/\sqrt{2}$,
\begin{equation}\label{sin}
\sin\left(Q+\frac{\pi}{4}+\sqrt{\mu}b\right)=O\left(\lambda^{-\frac{1}{2}}\right).
\end{equation}
Therefore, solutions $\{\hat{\lambda}_k\}_{k=1}^{\infty}$ of type $\Lambda^D$ satisfy
\begin{equation}\label{eq_distfunc_D}
Q(b,\hat{\lambda}_n)+\frac{\pi}{4}+b\sqrt{\hat{\lambda}_n-q(b)}=n\pi+\beta(\hat{\lambda}_n),
\end{equation}
for some function $\beta(\lambda)$ tending to zero as $\lambda\rightarrow\infty$. To investigate \eqref{eq_distfunc_D} further, we need a better estimate on $\beta(\lambda)$. Combining \eqref{eq_for_D} and \eqref{eq_distfunc_D}  we obtain, for $\lambda=\hat{\lambda}_n$,
\begin{align}\label{key}
\begin{split}
&\sin\beta(\lambda)\Big(2d_1\sqrt{\mu}\sin\phi(\lambda)+d_1\left(k_1^+-k_1^-\right)-d_2\left(k_2^+-k_2^-\right)\Big)\\
&-\cos\beta(\lambda)\Big(2d_2\sqrt{\mu}\sin\phi(\lambda)+d_1\left(k_2^+-k_2^-\right)+d_2\left(k_1^+-k_1^-\right)\Big)\\
&=O\left(\lambda^{-\frac{\alpha+4}{2\alpha}}\right)+O\left(\lambda^{-1}\right).
\end{split}
\end{align}
Then, for $\lambda=\hat{\lambda}_n$, Lemma \ref{k_pus_k} and \eqref{asympt_for_d} imply that
\begin{align}\label{sinbcosb}
\begin{split}
&\sin\beta(\lambda)\Big(2\sqrt{\mu}\sin\phi(\lambda)+k_1^+-k_1^-+O(\lambda^{-\frac{1}{\alpha}})\Big)\\
&-\cos\beta(\lambda)\left(k_2^+-k_2^- +2d_2\sqrt{\mu}\sin\phi(\lambda)\right)= O\left(\lambda^{-1}\right)+O\left(\lambda^{-\frac{\alpha+2}{2\alpha}}\right).\\
\end{split}
\end{align}
Now, let us investigate \eqref{eq_for_N}. By using \eqref{asympt_for_d}, \eqref{sin} and Lemma \ref{k_pus_k}, we conclude, from \eqref{eq_for_N}, that $\sqrt{\mu}c_1=O(1)$, so that
\begin{equation}\label{c1}
c_1=\cos\phi(\lambda)=O(\lambda^{-\frac{1}{2}}),
\end{equation}
and consequently $k_1^{\pm}=O(\lambda^{-\frac{1}{2}})$. Therefore \eqref{sinbcosb} gives
\begin{align}
\begin{split}
\sin\beta(\lambda)=&\frac{\cos\beta(\lambda)}{2\sqrt{\mu}\sin\phi(\lambda)+O(\lambda^{-\frac{1}{2}})+O(\lambda^{-\frac{1}{\alpha}})}\Big(k_2^+-k_2^-+2d_2\sqrt{\mu}\sin\phi(\lambda)\Big)\\
&+O\left(\lambda^{-\frac{2\alpha+2}{2\alpha}}\right)+O\left(\lambda^{-\frac{3}{2}}\right).
\end{split}
\end{align}
and hence
\begin{align}\label{beta}
\begin{split}
\beta(\lambda)=\frac{1}{2\sqrt{\mu}\sin\phi(\lambda)}(k_2^+-k_2^-)+d_2+O\left(\lambda^{-\frac{2\alpha+2}{2\alpha}}\right)+O\left(\lambda^{-\frac{3}{2}}\right).
\end{split}
\end{align}
Thus, from \eqref{K2minusK2}, it follows
\begin{align}\label{*beta}
\begin{split}
	\beta(\lambda)=&\frac{1}{4\sqrt{\mu}}\int_{-b}^{b}(q(s)-q(b))ds+\frac{1}{4\sqrt{\mu}}\int_{-b}^{b}(q(s)-q(b))\cos(2\sqrt{\mu}s)ds\\
	&+\frac{\cos\phi(\lambda)}{4\sqrt{\mu}\sin\phi(\lambda)}\int_{-b}^{b}(q(s)-q(b))\sin(2\sqrt{\mu}s)ds+d_2+O\left(\lambda^{-1}\right),
\end{split}
\end{align}
so that the Riemann-Lebesgue lemma and \eqref{c1} imply
\begin{align*}
	\begin{split}
		\beta(\lambda)=&\frac{1}{4\sqrt{\mu}}\int_{-b}^{b}(q(s)-q(b))ds+\frac{1}{4\sqrt{\mu}}\int_{-b}^{b}V(s)\cos(2\sqrt{\mu}s)ds+d_2+O\left(\lambda^{-1}\right)
	\end{split}
\end{align*}
and consequently \eqref{eq_distfunc_D} gives
\begin{align}\label{lambda_hat}
\begin{split}
	n\pi&=Q(b,\hat{\lambda}_n)+\frac{\pi}{4}+b\sqrt{\hat{\lambda}_n-q(b)}-\frac{1}{4\sqrt{\hat{\lambda}_n}}\int_{-b}^{b}(q(s)-q(b))ds\\
	&-\frac{1}{4\sqrt{\hat{\lambda}_n}}\int_{-b}^{b}V(s)\cos(2\sqrt{\hat{\lambda}_n}s)ds-d_2(\hat{\lambda}_n)+O\left(\hat{\lambda}_n^{-1}\right).
	\end{split}
\end{align}

\textbf{Case 2.} Assume $\lambda\in \Lambda^N$, so that $|\cos\phi(\lambda)|\geq 1/\sqrt{2}$. Therefore \eqref{eq_for_N}, together with Lemma \ref{k_pus_k} and \eqref{asympt_for_d}, implies 
\begin{equation}\label{cos}
	\cos\left(Q+\frac{\pi}{4}+\sqrt{\mu}b\right)=O\left(\lambda^{-\frac{1}{2}}\right).
\end{equation}
Therefore, the solutions $\{\check{\lambda}_k\}_{k=1}^{\infty}$ of type $\Lambda^N$ satisfy
\begin{equation}\label{eq_distfunc_N}
Q(b,\check{\lambda}_n)+\frac{\pi}{4}+b\sqrt{\check{\lambda}_n-q(b)}=n\pi-\frac{\pi}{2}+\gamma(\check{\lambda}_n)
\end{equation}
By combining \eqref{eq_for_N}, \eqref{eq_distfunc_N} and then using Lemma \ref{k_pus_k} together with \eqref{asympt_for_d}, for $\lambda=\check{\lambda}_n$, we obtain
\begin{align}\label{singcosg}
	\begin{split}
	&\cos\gamma(\lambda)\left(k_1^++k_1^-+2d_2\sqrt{\mu}\cos\phi(\lambda)\right)\\
	&-\sin\gamma(\lambda)\Big(2\sqrt{\mu}\cos\phi(\lambda)-k_2^++k_2^-+O(\lambda^{-\frac{1}{\alpha}})\Big)
	=O\left(\lambda^{-1}\right)+O\left(\lambda^{-\frac{\alpha+2}{2\alpha}}\right).
\end{split}
\end{align}
Let us investigate \eqref{eq_for_D}. By using \eqref{asympt_for_d}, \eqref{cos} and Lemma \ref{k_pus_k}, we conclude, from \eqref{eq_for_D}, that $c_2=O(1)$, so that 
\begin{equation}\label{c2}
	\sin\phi(\lambda)=O(\lambda^{-\frac{1}{2}}),
\end{equation}
and consequently $k_2^{\pm}=O(\lambda^{-\frac{1}{2}})$. Therefore \eqref{singcosg} gives
\begin{align}\label{gamma}
	\begin{split}
	\sin\gamma(\lambda)=&\frac{\cos\gamma(\lambda)}{2\sqrt{\mu}\cos\phi(\lambda)+O(\lambda^{-\frac{1}{2}})+O(\lambda^{-\frac{1}{\alpha}})}\left(k_1^++k_1^-++2d_2\sqrt{\mu}\cos\phi(\lambda)\right)\\
	&+O\left(\lambda^{-\frac{2\alpha+2}{2\alpha}}\right)+O\left(\lambda^{-\frac{3}{2}}\right)
	\end{split}
\end{align}
and hence
\begin{equation*}
	\gamma(\lambda)=\frac{1}{2\sqrt{\mu}\cos\phi(\lambda)}(k_1^++k_1^-)+d_2+O\left(\lambda^{-\frac{2\alpha+2}{2\alpha}}\right)+O\left(\lambda^{-\frac{3}{2}}\right).
\end{equation*}
Therefore, from \eqref{K1minusK1}, it follows
\begin{align}\label{*gamma}
	\begin{split}
		\gamma(\lambda)=&\frac{1}{4\sqrt{\mu}}\int_{-b}^{b}(q(s)-q(b))ds+\frac{1}{4\sqrt{\mu}}\int_{-b}^{b}(q(s)-q(b))\cos(2\sqrt{\mu}s)ds\\
		&+\frac{\sin\phi(\lambda)}{4\sqrt{\mu}\cos\phi(\lambda)}\int_{-b}^{b}(q(s)-q(b))\sin(2\sqrt{\mu}s)ds+d_2+O(\lambda^{-1}),
	\end{split}
\end{align}
so that the Riemann-Lebesgue lemma and \eqref{c2} imply
\begin{align*}
	\begin{split}
		\gamma(\lambda)=&\frac{1}{4\sqrt{\mu}}\int_{-b}^{b}(q(s)-q(b))ds+\frac{1}{4\sqrt{\mu}}\int_{-b}^{b}V(s)\cos(2\sqrt{\mu}s)ds+d_2+O\left(\lambda^{-1}\right).
	\end{split}
\end{align*}
This and \eqref{eq_distfunc_N} give
\begin{align}\label{lambda_check}
\begin{split}
	n\pi=&Q(b,\check{\lambda}_n)+\frac{3\pi}{4}+b\sqrt{\check{\lambda}_n-q(b)}-\frac{1}{4\sqrt{\check{\lambda}_n}}\int_{-b}^{b}(q(s)-q(b))ds\\
	&-\frac{1}{4\sqrt{\check{\lambda}_n}}\int_{-b}^{b}V(s)\cos(2\sqrt{\check{\lambda}_n}s)ds-d_2(\check{\lambda}_n)+O\left(\check{\lambda}_n^{-1}\right).
	\end{split}
\end{align}
	From  \eqref{lambda_hat} and \eqref{lambda_check}, we notice that solutions of types $\Lambda^D$ and $\lambda^N$ are interlacing at infinity, that is
	\begin{equation}\label{interlacing}
	\check{\lambda}_n\leq \hat{\lambda}_n\leq \check{\lambda}_{n+1}\leq\hat{\lambda}_{n+1}\leq \cdots
\end{equation}
for sufficiently large $n\in \mathbb{N}$. Denote $\nu_{2n-1}=\check{\lambda}_n$ and $\nu_{2n}=\hat{\lambda}_n$. Then $\{\nu_j\}_{j=1}^{\infty}$ are solutions to \eqref{eq_plus}, \eqref{eq_minus}, and $\nu_{n+1}>\nu_{n}$ for sufficiently large $n\in \mathbb{N}$. Moreover, the asymptotic formulas \eqref{lambda_hat} and \eqref{lambda_check} give
\begin{align}\label{nu}
\begin{split}
\frac{\pi}{4}(2n-1)=&Q(b,\nu_n)+b\sqrt{\nu_n-q(b)}-\frac{1}{4\sqrt{\nu_n}}\int_{-b}^{b}(q(s)-q(b))ds\\
&-\frac{1}{4\sqrt{\nu_n}}\int_{-b}^{b}V(s)\cos(2\sqrt{\nu_n}s)ds-d_2(\nu_n)+O\left(\nu_n^{-1}\right)
\end{split}
\end{align}
as $n\rightarrow\infty$. As we mentioned before, the eigenvalues of $H$ are solutions to \eqref{eq_plus}, \eqref{eq_minus}. Therefore, to prove the theorem, it is sufficient to show 
\begin{equation}\label{sol_and_eig}
N(\lambda,H)=\#\{ \nu_j \le \lambda\}+O(1),
\quad \lambda>0,
\end{equation}
where $\#$ means the cardinality of a set, and $N(\lambda,H)$ is the counting function of $H$, that is
\begin{equation*}
	N(\lambda, H) := \# \{ \lambda' \le \lambda, \quad \lambda' \textnormal{ is an eigenvalue of } H \}=\#\{ \lambda_j \le \lambda\}.
\end{equation*}
In order to prove \eqref{sol_and_eig}, let us consider the operator $H_0$ in $L^2(\mathbb{R})$, generated by the expression
\begin{equation*}
	-\frac{d^2}{dx^2}+q_0(x),
	\quad
	q_0(x) = \sum_{j=1}^N c_j |x|^{\alpha_j}.
\end{equation*}
Since the operator of multiplication by $V(x)$ is bounded and symmetric in $L^2(\mathbb{R})$, \cite[Theorem 4.10]{Kato} implies
\begin{equation}\label{assymptotic_for_distr_funct2}
N(\lambda, H)=N(\lambda, H_0)+O(1),
\quad
\lambda \rightarrow\infty.
\end{equation}
On the other hand, \cite[formula (7.7.4)]{T} gives
\begin{equation}\label{assymptotic_for_distr_funct3}
N(\lambda, H_0)=\frac{2}{\pi}\int_{0}^{a(\lambda)}\sqrt{\lambda-q_0(t)}dt+O(1),
\quad
\lambda \rightarrow\infty.
\end{equation}
Finally, since $q_0(x)=q(x)$ for $x>b$, we estimate
\begin{align*}
	& \int_{0}^{a(\lambda)}\sqrt{\lambda-q_0(t)}dt-Q(b,\lambda)-b\sqrt{\lambda-q(b)}\\
	&=\int_{0}^{a(\lambda)}\sqrt{\lambda-q_0(t)}dt-\int_{b}^{a(\lambda)}\sqrt{\lambda-q(t)}dt-b\sqrt{\lambda-q(b)}\\
	&=\int_{0}^{b}\sqrt{\lambda-q_0(t)}dt-b\sqrt{\lambda-q(b)}=\int_{0}^{b}\frac{-q_0(t)+q(b)}{\sqrt{\lambda-q_0(t)}+\sqrt{\lambda-q(b)}}dt=O(\lambda^{-\frac{1}{2}}).
\end{align*}
Therefore \eqref{assymptotic_for_distr_funct2} and \eqref{assymptotic_for_distr_funct3} imply
\begin{equation*}
	N(\lambda, H)=\frac{2}{\pi}Q(b,\lambda)+\frac{2}{\pi}b\sqrt{\lambda-q(b)}+O(1). 
\end{equation*}
This and \eqref{nu} give \eqref{sol_and_eig}, which consequently implies that eigenvalues $\{\lambda_n\}_{n=1}^{\infty}$ satisfy the asymptotic formula \eqref{nu}, and \eqref{asympt_for_d} completes the proof.
\end{proof}

Next we derive Theorem \ref{main_th} from Theorem \ref{gen_th}.

\begin{proof}
	According to \cite[formula (38)]{MA}, the function $Q(x,\lambda)$, corresponding to the potential $q(x)=|x|^{\alpha}+V(x)$, has the following asymptotic behavior, as $\lambda\rightarrow\infty$,
	\begin{equation}\label{asympt_for_Q}
	Q(b,\lambda) = \frac{\Gamma\left(\frac{3}{2}\right) \Gamma\left(\frac{1}{\alpha}\right)}{\alpha \Gamma\left(\frac{3}{2}+\frac{1}{\alpha}\right)} \lambda^{\frac{\alpha+2}{2\alpha}} - b \sqrt{\lambda} + \frac{1}{2 \sqrt{\lambda}} b^{\alpha+1}(\alpha+1)^{-1} + O\left(\lambda^{-\frac{3}{2}}\right)
	\end{equation}
	for fixed $b>0$ such that $V(x)=0$ for $|x|\geq b$. Inserting this into \eqref{nu} gives
\begin{align}\label{L}
\begin{split}
\frac{\pi}{4}(2n-1)=&\frac{\Gamma\left(\frac{3}{2}\right) \Gamma\left(\frac{1}{\alpha}\right)}{\alpha \Gamma\left(\frac{3}{2}+\frac{1}{\alpha}\right)} \lambda_n^{\frac{\alpha+2}{2\alpha}}-b\sqrt{\lambda_n}+\frac{1}{2\sqrt{\lambda_n}}\cdot\frac{b^{\alpha+1}}{\alpha+1}+b\sqrt{\lambda_n-q(b)}\\
&-\frac{1}{2\sqrt{\lambda_n}}\cdot\frac{b^{\alpha+1}}{\alpha+1}+\frac{b^{\alpha+1}}{2\sqrt{\lambda_n}}-\frac{1}{4\sqrt{\lambda_n}}\int_{-b}^{b}V(s)ds\\
&-\frac{1}{4\sqrt{\lambda_n}}\int_{-b}^{b}V(s)\cos(2\sqrt{\lambda_n}s)ds-d_2(\lambda_n)+O\left(\lambda_n^{-1}\right).
\end{split}
\end{align}
Note that
\begin{equation*}
	\begin{gathered}
		\sqrt{\lambda_n-q(b)}-\sqrt{\lambda_n}+\frac{b^{\alpha}}{2\sqrt{\lambda_n}}=\frac{-q(b)}{\sqrt{\lambda_n-q(b)}+\sqrt{\lambda_n}}+\frac{b^{\alpha}}{2\sqrt{\lambda_n}}\\
		=-b^{\alpha}\frac{2\sqrt{\lambda_n}-\sqrt{\lambda_n-q(b)}-\sqrt{\lambda_n}}{2\sqrt{\lambda_n}\left(\sqrt{\lambda_n-q(b)}+\sqrt{\lambda_n}\right)}=O\left(\lambda_n^{-\frac{3}{2}}\right)
	\end{gathered}
\end{equation*}
and, see \cite[(49)]{MA},
\begin{equation*}
	d_2(\lambda)=
	\begin{cases}
		O(\lambda^{-1}) & \text{if }\alpha\leq 2,\\
		\frac{\alpha(\alpha-1)\Gamma\left(\frac{3}{2}+\frac{1}{\alpha}\right)\cot\frac{\pi}{\alpha}}{48(2+\alpha)\Gamma\left(\frac{3}{2}\right)\Gamma\left(\frac{1}{\alpha}\right)}\lambda^{-\frac{\alpha+2}{2\alpha}} & \text{if } \alpha>2.
	\end{cases}
\end{equation*}
Therefore we can rewrite \eqref{L} in the following way
\begin{align}\label{asymptotic_distr}
	\begin{split}
		\frac{\pi}{4}(2n-1)=&\frac{\Gamma\left(\frac{3}{2}\right) \Gamma\left(\frac{1}{\alpha}\right)}{\alpha \Gamma\left(\frac{3}{2}+\frac{1}{\alpha}\right)} \lambda_n^{\frac{\alpha+2}{2\alpha}}-\frac{1}{4\sqrt{\lambda_n}}\int_{-\infty}^{+\infty}V(s)ds\\
		&-\frac{\alpha(\alpha-1)\Gamma\left(\frac{3}{2}+\frac{1}{\alpha}\right)\cot\frac{\pi}{\alpha}}{48(2+\alpha)\Gamma\left(\frac{3}{2}\right)\Gamma\left(\frac{1}{\alpha}\right)}\lambda_n^{-\frac{\alpha+2}{2\alpha}}\\
		&-\frac{1}{4\sqrt{\lambda_n}}\int_{-b}^{b}V(s)\cos(2\sqrt{\lambda_n}s)ds+O(\lambda_n^{-1}).
\end{split}
\end{align}
as $n\rightarrow\infty$. This implies \eqref{main_asy}.
\end{proof}

\section{Remarks and consequences}
We start with the following remark.
\begin{remark}
	According to \cite[page 92]{SR}, the third and fourth terms are $O(n^{-\frac{\alpha\tau+2}{\alpha+2}})$ and $O(n^{-\frac{4}{\alpha+2}})$, respectively, so that they may change their order, depending on $V(x)$ and the relation between $\alpha$ and $\tau$. Moreover, in case $\alpha\leq 2$, the fourth term will be absorbed by $O(n^{-1})$. However, the third term, in general, is not absorbed by $O(n^{-1})$; see Example \ref{example1}.
\end{remark}

Next, we give an example demonstrating the effect of the parameter $ \tau $ on the eigenvalues.
\begin{example}\label{example1}
	Let $H$ be the self-adjoint operator in $L^2(\mathbb{R})$ generated by expression
	\begin{equation*}
		-\frac{d^2}{dx^2}+x^2+V(x),
	\end{equation*}
where, for $1>\tau>0$,
\begin{equation}\label{exampleV}
	V(x)=\begin{cases}
		\sum_{j=1}^{\infty}2^{-j\tau}\cos(2^jx) & \text{if } |x|\leq\pi,\\
		0& \text{otherwise }.
	\end{cases}
\end{equation}
By \cite[Theorem 4.9]{Zygmund}, $V(x)$ is a H{\"o}lder continuous function with exponent $\tau>0$ in $[-\pi,\pi]$. Therefore Theorem \ref{main_th} shows that eigenvalues of $H$ satisfy
\begin{align*}
	\begin{split}
		\lambda_n=2n-1+\frac{1}{4\sqrt{2n}}\int_{-\pi}^{\pi}V(s)ds+\frac{1}{4\pi\sqrt{2n}}\int_{-\pi}^{\pi}V(s)\cos(2\sqrt{2n}s)ds+O(n^{-1}).
    \end{split}
\end{align*}
Let us consider the subsequence of the eigenvalues $\{\lambda_{n_k}\}_{k=2}^{\infty}$ with $n_k=2^{2k-3}$. Since
\begin{equation*}
	\int_{-\pi}^{\pi}V(s)\cos(2\sqrt{2n_k}s)ds
	=\int_{-\pi}^{\pi}\sum_{j=1}^{\infty}2^{-j\tau}\cos(2^js)\cos(2^ks)ds=\pi2^{-k\tau},
\end{equation*}
we derive the asymptotic for the subsecuence of the eigenvalues of $H$, $\{\lambda_{n_k}\}_{k=2}^{\infty}$,
\begin{equation*}
	\lambda_{n_k}=2n_k-1+n_k^{-\frac{1}{2}}\frac{1}{4\sqrt{2}}\int_{-\pi}^{\pi}V(s)ds +n_k^{-\frac{1+\tau}{2}}2^{-\frac{5+3\tau}{2}}+O(n_k^{-1}).
\end{equation*}
\end{example}

Next, we give two examples for which the asymptotic formulas \eqref{asy_gen} can be written more explicitly.
\begin{example}
	Assume that $V(x)$ satisfies conditions of Theorem \ref{gen_th} and $\alpha\in \mathbb{N}$, $c\in \mathbb{R}$. Let $H$ be the self-adjoint operator in $L^2(\mathbb{R})$ generated by the expression
	\begin{equation*}
		-\frac{d^2}{dx^2} + (|x|+c)^{\alpha}+V(x).
	\end{equation*}
	By Theorem \ref{gen_th}, the eigenvalues of $H$ satisfy \eqref{asy_gen}. By \cite[(38)]{MA}, we compute:
	\begin{align*}
		\begin{split}
			Q(b,\lambda)&=\int_{b}^{a(\lambda)}(\lambda-(t+c)^{\alpha})^{\frac{1}{2}}dt=\lambda^{\frac{\alpha+2}{2\alpha}}\int_{\frac{b+c}{\lambda^{1/\alpha}}}^{1}(1-t^\alpha)^{\frac{1}{2}}dt\\
			&=\frac{\Gamma\left(\frac{3}{2}\right) \Gamma\left(\frac{1}{\alpha}\right)}{\alpha \Gamma\left(\frac{3}{2}+\frac{1}{\alpha}\right)} \lambda^{\frac{\alpha+2}{2\alpha}} - (b+c) \sqrt{\lambda} + \frac{1}{2 \sqrt{\lambda}} (b+c)^{\alpha+1}(\alpha+1)^{-1} + O\left(\lambda^{-\frac{3}{2}}\right)
		\end{split}
	\end{align*}
	Therefore \eqref{asy_gen} gives
	\begin{align*}
		\begin{split}
			\frac{\pi}{4}(2n-1)&=\frac{\Gamma\left(\frac{3}{2}\right) \Gamma\left(\frac{1}{\alpha}\right)}{\alpha \Gamma\left(\frac{3}{2}+\frac{1}{\alpha}\right)} \lambda_n^{\frac{\alpha+2}{2\alpha}}
			-c\sqrt{\lambda_n}-\frac{1}{4\sqrt{\lambda_n}}\left(\frac{2c^{\alpha+1}}{\alpha+1}-\int_{-\infty}^{+\infty}V(s)ds\right)\\
			&-\frac{1}{4\sqrt{\lambda_n}}\int_{-b}^{b}V(s)\cos(2\sqrt{\lambda_n}s)ds+O\left(\lambda_n^{-\frac{\alpha+2}{2\alpha}}\right)+O\left(\lambda_n^{-1}\right).\\
		\end{split}
	\end{align*}
	This gives the first two terms of the heat trace. Indeed, this implies that
	\begin{equation*}
		N(\lambda)=\frac{1}{2}C_1 \lambda^{\frac{\alpha+2}{2\alpha}}-\frac{2c}{\pi}\lambda^{\frac{1}{2}}+O(1)
	\end{equation*}
	By applying the Watson's Lemma \cite[Lemma 2.2]{PS}, we obtain
	\begin{equation*}
		\int_{0}^{\infty}e^{-t\lambda}N(\lambda)d\lambda=
		\frac{1}{2}C_1 \Gamma\left(1+\frac{\alpha+2}{2\alpha}\right) t^{-\frac{\alpha+2}{2\alpha}-1}-\frac{c}{\sqrt{\pi}}t^{-\frac{3}{2}}+O(t^{-1})
	\end{equation*}
	as $t\rightarrow +0$. Therefore, as $t\rightarrow+0$,
	\begin{align*}
		\begin{split}
			\sum_{n=1}^{\infty}e^{-t\lambda_n}=\int_{0}^{\infty}e^{-t\lambda}dN(\lambda)&=t\int_{0}^{\infty}e^{-t\lambda}N(\lambda)d\lambda\\
			&=\frac{1}{\sqrt{\pi}}\Gamma\left(\frac{\alpha+1}{\alpha}\right)t^{-\frac{\alpha+2}{2\alpha}}-\frac{c}{\sqrt{\pi}}t^{-\frac{1}{2}}+O\left(1\right).
		\end{split}
	\end{align*}
\end{example}
    By the same way, one can derive the first term of the heat trace corresponding to the operator defined in Theorem \ref{main_th} and verify that the perturbation does not affect the first term:
    \begin{cor}
    	Under the conditions of Theorem \ref{main_th}, the heat trace satisfies
    	\begin{equation*}
    		\sum_{n=1}^{\infty}e^{-t\lambda_n}=\frac{1}{\sqrt{\pi}}\Gamma\left(\frac{\alpha+1}{\alpha}\right)t^{-\frac{\alpha+2}{2\alpha}}+O\left(1\right),
    		\quad \text{as }
    		\quad t\rightarrow +0.
    	\end{equation*}
    \end{cor}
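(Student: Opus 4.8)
The plan is to mimic the heat trace computation carried out for the $(|x|+c)^\alpha$ example, taking $c=0$ there. The starting point is the asymptotic expansion \eqref{main_asy} of Theorem \ref{main_th}, which, after extracting the leading term, is equivalent to the counting function asymptotics. First I would observe that the argument leading to \eqref{assymptotic_for_distr_funct2} and \eqref{assymptotic_for_distr_funct3} (i.e.\ the Rellich--Kato perturbation bound together with Titchmarsh's formula \cite[(7.7.4)]{T}) applies verbatim to the operator $-d^2/dx^2+|x|^\alpha+V(x)$, with non-perturbed operator $H_0=-d^2/dx^2+|x|^\alpha$. This yields
\begin{equation*}
N(\lambda,H)=\frac{2}{\pi}\int_0^{a(\lambda)}\sqrt{\lambda-t^\alpha}\,dt+O(1),\qquad \lambda\to\infty,
\end{equation*}
and by the computation of $Q(b,\lambda)$ in \eqref{asympt_for_Q} (with the $-b\sqrt\lambda$ and $\frac{1}{2\sqrt\lambda}b^{\alpha+1}(\alpha+1)^{-1}$ terms), the integral equals $\frac{\Gamma(3/2)\Gamma(1/\alpha)}{\alpha\Gamma(3/2+1/\alpha)}\lambda^{\frac{\alpha+2}{2\alpha}}+O(\lambda^{-1/2})$. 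Hence
\begin{equation*}
N(\lambda,H)=\tfrac12 C_1\,\lambda^{\frac{\alpha+2}{2\alpha}}+O(1),\qquad \lambda\to\infty,
\end{equation*}
where $C_1$ is as in Theorem \ref{main_th}; note that here, unlike the $c\neq0$ example, there is no $\lambda^{1/2}$ term, so the perturbation does not enter at the orders that survive.

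Next I would pass from the counting function to the heat trace. Writing $\sum_{n=1}^\infty e^{-t\lambda_n}=\int_0^\infty e^{-t\lambda}\,dN(\lambda,H)=t\int_0^\infty e^{-t\lambda}N(\lambda,H)\,d\lambda$ (integration by parts, valid since $N(\lambda,H)e^{-t\lambda}\to0$ at both ends), I substitute the expansion of $N(\lambda,H)$ and apply Watson's Lemma \cite[Lemma 2.2]{PS} to the main term: $t\int_0^\infty e^{-t\lambda}\tfrac12 C_1\lambda^{\frac{\alpha+2}{2\alpha}}\,d\lambda=\tfrac12 C_1\,\Gamma\!\left(1+\tfrac{\alpha+2}{2\alpha}\right)t^{-\frac{\alpha+2}{2\alpha}}$, while the $O(1)$ remainder contributes $t\int_0^\infty e^{-t\lambda}O(1)\,d\lambda=O(1)$ as $t\to+0$. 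Plugging in $C_1=\frac{4\Gamma(3/2)\Gamma(1/\alpha)}{\alpha\pi\Gamma(3/2+1/\alpha)}$ and simplifying the Gamma factors — using $\Gamma(3/2)=\tfrac{\sqrt\pi}{2}$, $\Gamma\!\left(1+\tfrac{\alpha+2}{2\alpha}\right)=\tfrac{\alpha+2}{2\alpha}\Gamma\!\left(\tfrac{\alpha+2}{2\alpha}\right)=\tfrac{\alpha+2}{2\alpha}\Gamma\!\left(\tfrac12+\tfrac1\alpha\right)$, together with the duplication/reflection manipulation that already collapsed the constant in the $c\neq0$ example to $\tfrac{1}{\sqrt\pi}\Gamma\!\left(\tfrac{\alpha+1}{\alpha}\right)$ — gives the claimed leading coefficient $\frac{1}{\sqrt\pi}\Gamma\!\left(\frac{\alpha+1}{\alpha}\right)$.

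The only point requiring a little care is the justification of the term-by-term Laplace transform and the control of the error: one must know not merely $N(\lambda,H)=\tfrac12C_1\lambda^{\frac{\alpha+2}{2\alpha}}+O(1)$ pointwise but that the $O(1)$ is genuinely uniform so that its Laplace transform is $O(t^{-1})\cdot t=O(1)$; this is immediate from \eqref{assymptotic_for_distr_funct2}--\eqref{assymptotic_for_distr_funct3}, which are stated with a uniform $O(1)$. The Gamma-function bookkeeping is the only other place an arithmetic slip could occur, but it is exactly the computation already performed (with $c=0$) in the preceding example, so no genuine obstacle arises; the corollary is really a specialization of that example combined with the observation that the $V$-dependent terms in \eqref{main_asy} all decay and hence do not reach the two leading orders of $N(\lambda,H)$.
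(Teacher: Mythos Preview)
Your proposal is correct and follows essentially the same route as the paper. The paper does not spell out a separate proof for this corollary; it simply states that ``by the same way'' as in the preceding example (the $(|x|+c)^\alpha$ case) one obtains the heat trace expansion, and your argument is exactly that specialization to $c=0$: counting function asymptotics from \eqref{assymptotic_for_distr_funct2}--\eqref{assymptotic_for_distr_funct3}, then Watson's Lemma and the integration-by-parts identity $\sum e^{-t\lambda_n}=t\int_0^\infty e^{-t\lambda}N(\lambda)\,d\lambda$.
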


The next example is a quartic AHO, for which $Q(b,\lambda)$ expands in terms of $\{\lambda^{-\frac{k}{4}}\}_{k=-3}^{+\infty}$. Consequently, the asymptotic formula can be written more explicitly.

\begin{example}[Quartic AHO]
	Assume that function $V(x)$ satisfies conditions of Theorem \ref{gen_th} and $c\in\mathbb{R}$. Let $H$ be the self-adjoint operator in $L^2(\mathbb{R})$ generated by
	\begin{equation*}
		-\frac{d^2}{dx^2}+(x^2+c)^2+V(x), \qquad c\in \mathbb{R}.
	\end{equation*}
Then, by Theorem \ref{gen_th}, the eigenvalues of $H$ satisfy \eqref{asy_gen}. Let us investigate $Q(b,\lambda)$:
\begin{align*}
	Q(b,\lambda)&=\int_{b}^{a(\lambda)}[\lambda-(x^2+c)^2]^{\frac{1}{2}}dx=\int_{b}^{(\lambda^{\frac{1}{2}}-c)^{\frac{1}{2}}}[\lambda-(x^2+c)^2]^{\frac{1}{2}}dx\\
	&=\lambda^{\frac{3}{4}}\int_{\frac{b}{(\lambda^{\frac{1}{2}}-c)^{\frac{1}{2}}}}^{1}\left[1-\left(\left(1-\frac{c}{\lambda^{\frac{1}{2}}}\right)y^2+\frac{c}{\lambda^{\frac{1}{2}}}\right)^2\right]^{\frac{1}{2}}
	\left(1-\frac{c}{\lambda^{\frac{1}{2}}}\right)^{\frac{1}{2}}dy.
\end{align*}
Let $\lambda=1/r^4$ and 
\begin{equation*}
	g(r):=\int_{\frac{br}{(1-cr^2)^{\frac{1}{2}}}}^{1}\left[1-((1-cr^2)y^2+cr^2)^2\right]^{\frac{1}{2}}(1-cr^2)^{\frac{1}{2}}dy.
\end{equation*}
By computing the Taylor expansion of $g(r)$ at $r=0$, one can express $Q(b,\lambda)$ in terms of $\{\lambda^{-\frac{n}{4}}\}$. Therefore, by \eqref{asy_gen}, we conclude
\begin{equation*}
	\frac{\pi}{4}(2n-1)=\sum_{k=0}^{6}a_k\lambda^{\frac{3-k}{4}}-\frac{1}{4\sqrt{\lambda_n}}\int_{-b}^{b}V(s)\cos(2\sqrt{\lambda_n}s)ds+O(\lambda^{-1}),
\end{equation*}
were $a_k=g^{(k)}(0)/k!$ for $k\neq 1$, $5$ and
\begin{equation*}
	a_1=g^{(1)}(0)-1, \quad a_5=g^{(5)}(0)/5!-\frac{1}{4}\int_{-b}^{b}q(s)ds.
\end{equation*}
\end{example}

\begin{remark}\label{Remark}
	Similarly, one can investigate AHO with potential $q(x)=(|x|^{\alpha}+c)^{n}$ with $\alpha>0$, $m\in\mathbb{N}$, and derive more explicit forms of \eqref{asy_gen}.
\end{remark}

We end this section by considering the operators in $L_2[0,\infty)$ generated by the expression \eqref{def_patential1} with Dirichlet and Neumann boundary conditions, respectively. 
\begin{cor}\label{asy_for_DN}
	Assume that $V(x)$ satisfies the conditions of Theorem \ref{main_th}. Let $\{\lambda^D_j\}_{j=1}^{\infty}$ and $\{\lambda^N_j\}_{j=1}^{\infty}$ be the eigenvalues of the operators in $L_2[0,\infty)$ generated by \eqref{def_patential1} with Dirichlet and Neumann boundary conditions, respectively. Then
	\begin{align}\label{asy_for_D}
		\begin{split}
			\lambda_n^N&=C_1^{-\frac{2\alpha}{\alpha+2}}(4n-1)^{\frac{2\alpha}{\alpha+2}}+\frac{2\alpha}{\alpha+2}C_0C_1^{-\frac{\alpha+1}{\alpha+2}}(4n-3)^{-\frac{2}{\alpha+2}}\\
			&+\frac{2\alpha}{\alpha+2}\frac{1}{4\pi}C_1^{-\frac{\alpha+4}{\alpha+2}}(4n-1)^{-\frac{2}{\alpha+2}}\int_{0}^{\infty}V(s)\cos\left(2C_1^{-\frac{\alpha}{\alpha+2}}(4n-1)^{\frac{\alpha}{\alpha+2}}s\right)ds\\
			&+\frac{2\alpha}{\alpha+2}C_2C_1^{-\frac{\alpha+6}{\alpha+2}}(4n-1)^{-\frac{4}{\alpha+2}}+O\left(n^{-1}\right),
		\end{split}
	\end{align}
	\begin{align}\label{asy_for_N}
		\begin{split}
			\lambda_n^N&=C_1^{-\frac{2\alpha}{\alpha+2}}(4n-3)^{\frac{2\alpha}{\alpha+2}}+\frac{2\alpha}{\alpha+2}C_0C_1^{-\frac{\alpha+4}{\alpha+2}}(4n-3)^{-\frac{2}{\alpha+2}}\\
			&+\frac{2\alpha}{\alpha+2}\frac{1}{4\pi}C_1^{-\frac{\alpha+4}{\alpha+2}}(4n-3)^{-\frac{2}{\alpha+2}}\int_{0}^{\infty}V(s)\cos\left(2C_1^{-\frac{\alpha}{\alpha+2}}(4n-3)^{\frac{\alpha}{\alpha+2}}s\right)ds\\
			&+\frac{2\alpha}{\alpha+2}C_2C_1^{-\frac{\alpha+6}{\alpha+2}}(4n-3)^{-\frac{4}{\alpha+2}}+O\left(n^{-1}\right),
		\end{split}
	\end{align}
	where $C_0=\frac{1}{\pi}\int_{0}^{\infty}V(x)dx$ and $C_1$, $C_2$ are the constants defined in Theorem \ref{main_th}.
\end{cor}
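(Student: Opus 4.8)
\textbf{Proof proposal for Corollary \ref{asy_for_DN}.}

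The plan is to reduce the half-line problems to the two families of solutions $\{\hat\lambda_k\}$ and $\{\check\lambda_k\}$ already analysed in the proof of Theorem \ref{gen_th}, thereby avoiding any new construction. Recall that, in that proof, the eigenvalues of type $\Lambda^D$ (where $|\sin\phi|\ge|\cos\phi|$, equivalently $c_1=\cos\phi(\lambda)=O(\lambda^{-1/2})$) are exactly those for which the solution on $[0,\infty)$, extended by the reflection trick, is forced to be (asymptotically) an odd eigenfunction of the whole-line operator; and the type-$\Lambda^N$ eigenvalues correspond to (asymptotically) even eigenfunctions. An odd eigenfunction of $H$ on $L^2(\R)$, restricted to $[0,\infty)$, is precisely a Dirichlet eigenfunction of the half-line operator $-d^2/dx^2+|x|^\alpha+V$, and an even one restricts to a Neumann eigenfunction; since $V$ itself need not be symmetric, this identification is not literally exact, but exactly the same linear-dependency argument as in Theorem \ref{gen_th}, now with the boundary condition $y(0)=0$ (resp.\ $y'(0)=0$) imposed at $x=0$ directly, reproduces \eqref{eq_plus} alone (there is no reflected equation \eqref{eq_minus} to add or subtract). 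First I would redo the Case 1 / Case 2 computation in this simpler setting: for the Dirichlet problem set $c_1=0$, $c_2=\sqrt\mu$, which lands us directly in the regime of \eqref{eq_for_D} with $c_1=0$, and for the Neumann problem set $c_1=1$, $c_2=0$, landing in \eqref{eq_for_N} with $c_2=0$.

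The Dirichlet case gives, in place of \eqref{eq_distfunc_D}, the quantization condition
\begin{equation*}
Q(b,\lambda_n^D)+\frac{\pi}{4}+b\sqrt{\lambda_n^D-q(b)}=n\pi+\beta^D(\lambda_n^D),
\end{equation*}
and the Neumann case gives, in place of \eqref{eq_distfunc_N},
\begin{equation*}
Q(b,\lambda_n^N)+\frac{\pi}{4}+b\sqrt{\lambda_n^N-q(b)}=n\pi-\frac{\pi}{2}+\gamma^N(\lambda_n^N),
\end{equation*}
where $\beta^D$ and $\gamma^N$ are estimated exactly as $\beta$ and $\gamma$ were, except that now only one of $k_j^+$ appears (no $k_j^-$), and the integrals run over $[0,b]$ rather than $[-b,b]$. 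Concretely, tracking the constants through Lemma \ref{k_pus_k}, \eqref{asympt_for_d} and the Riemann--Lebesgue lemma as before, one gets
\begin{equation*}
\beta^D(\lambda)=\frac{1}{4\sqrt{\mu}}\int_{0}^{b}(q(s)-q(b))ds+\frac{1}{4\sqrt\mu}\int_0^\infty V(s)\cos(2\sqrt\mu s)\,ds+d_2(\lambda)+O(\lambda^{-1}),
\end{equation*}
and the analogous formula for $\gamma^N$ with the same leading terms. The different integer shifts ($n\pi$ versus $n\pi-\pi/2$) together with the replacement of $\frac{\pi}{4}(2n-1)$ by the appropriate $\frac{\pi}{4}(4n-1)$ or $\frac{\pi}{4}(4n-3)$ are then a bookkeeping matter: solving $N(\lambda,H^D)=\#\{\lambda_j^D\le\lambda\}+O(1)$ as in the end of the Theorem \ref{gen_th} proof (using \cite[formula (7.7.4)]{T} for the half-line counting function, which supplies exactly the half of the two-sided count plus a boundary-dependent $O(1)$) pins down which arithmetic progression of half-integers goes with Dirichlet and which with Neumann.

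Finally I would feed the expansion \eqref{asympt_for_Q} of $Q(b,\lambda)$ and the formula for $d_2(\lambda)$ into these two quantization conditions, invert the resulting relation $\frac{\pi}{4}(4n-1)=\mathrm{const}\cdot(\lambda_n^D)^{\frac{\alpha+2}{2\alpha}}+\cdots$ for $\lambda_n^D$ (and similarly for $\lambda_n^N$) by the same Lagrange-inversion / successive-approximation step used to pass from \eqref{asymptotic_distr} to \eqref{main_asy}, and read off \eqref{asy_for_D} and \eqref{asy_for_N}, noting that $C_0$ is now $\frac1\pi\int_0^\infty V$ because the relevant integral is one-sided. The one genuine point needing care --- the main obstacle --- is the identification of the correct integer/half-integer shift for each boundary condition, i.e.\ making sure the phase $\frac\pi4$ coming from the WKB/Airy connection formula \eqref{eta}--\eqref{eta_prime} combines correctly with the $0$ versus $\pi/2$ coming from the $c_1=0$ versus $c_2=0$ choice at the origin; this is precisely where a sign or a $\pi/2$ could be lost, and it must be cross-checked against the counting-function normalization \eqref{assymptotic_for_distr_funct3} adapted to $[0,\infty)$. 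Everything else is a direct specialization of the computations already carried out for Theorem \ref{gen_th}, with $[-b,b]$ integrals replaced by $[0,b]$ integrals and the $\pm$ pairs collapsed to a single term.
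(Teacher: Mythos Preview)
Your approach coincides with the paper's: it too specialises \eqref{eq_plus} to the boundary data $c_1=1,\ c_2=0$ (Neumann) and $c_1=0,\ c_2=\sqrt{\mu}$ (Dirichlet), observes that there is no reflected equation \eqref{eq_minus} to combine, and then reruns the Case~1/Case~2 analysis verbatim to obtain the half-line analogues of \eqref{lambda_hat} and \eqref{lambda_check} before inverting via \eqref{asympt_for_Q}. One caution in line with your own flagged ``main obstacle'': when you collapse $k_2^+-k_2^-$ to a single $k_2^+$ the prefactor in $\beta^D$ becomes $\tfrac{1}{2\sqrt\mu}\int_0^b(\cdots)$ rather than $\tfrac{1}{4\sqrt\mu}$, so track that factor of two through to the final $C_0$.
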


\begin{proof}
	Consider the solution of \eqref{diff_eq} in $[0,b]$ with boundary conditions $y(0)=1$, $y'(0)=0$, and the solution of \eqref{diff_eq} in $[b,\infty)$. By gluing them together, we obtain \eqref{eq_plus} (with $c_1=1$ and $c_2=0$)
	\begin{align}
		\begin{split}
			\sin&\left(Q+\frac{\pi}{4}+\sqrt{\mu}b\right)\Big(d_1k_1^++d_2\sqrt{\mu}-d_2k_2^+\Big)\\
			&+\cos\left(Q+\frac{\pi}{4}+\sqrt{\mu}b\right)\Big(d_1\sqrt{\mu}-d_1k_2^+-d_2k_1^+\Big)=O\left(\lambda^{-\frac{\alpha+4}{2\alpha}}\right)+O\left(\lambda^{-1}\right).\\
		\end{split}
	\end{align}
	The same analysis we did for \eqref{eq_for_N}(in order to investigate $\check{\lambda}_n$) yields formula \eqref{lambda_check}, which consequently implies \eqref{asy_for_N}. Similarly, one can prove \eqref{asy_for_D}.
\end{proof}

\begin{remark}
	Corollary \ref{asy_for_DN} for Dirichlet boundary condition is obtained in \cite{MA}.
\end{remark}

\bibliography{fn}
\bibliographystyle{amsalpha}

\end{document}